\newcommand{\isom}{\stackrel{\sim}\to}
\newcommand{\catD}{{D}}
\newcommand{\dbc}[1]{{D}^b(#1)}
\newcommand{\dpc}[1]{{D}^+(#1)}
\newcommand{\dmc}[1]{{D}^-(#1)}
\newcommand{\cdbc}[1]{{D}^b_c(#1)}
\newcommand{\Hom}{{\operatorname{Hom}}}
\newcommand{\Aut}{{\operatorname{Aut}}}
\newcommand{\Eq}{{\operatorname{Eq}}}
\newcommand{\End}{{\operatorname{End}}}
\newcommand{\lotimes}{{\,\stackrel{\mathbf L}{\otimes}\,}}
\newcommand{\Id}{{\operatorname{Id}}}
\DeclareMathOperator{\Img}{{Im}}
\DeclareMathOperator{\Rea}{{Re}}
\DeclareMathOperator{\Ker}{{Ker}}
\DeclareMathOperator{\Or}{{O}}
\DeclareMathOperator{\Sp}{{Sp}}
\DeclareMathOperator{\U}{{U}}
\DeclareMathOperator{\Biext}{{Biext}}
\DeclareMathOperator{\Corr}{{Corr}}
\newcommand{\bbR}{{\mathbb R}}
\newcommand{\bbC}{{\mathbb C}}
\newcommand{\bbQ}{{\mathbb Q}}
\newcommand{\bbZ}{{\mathbb Z}}
\newcommand{\cO}{{\mathcal O}}
\newcommand{\bR}{{\mathbf R}}
\newcommand{\bL}{{\mathbf L}}
\newcommand{\cplx}[1]{{{\mathcal #1}^{\scriptscriptstyle\bullet}}}
\newtheorem{thm}{Theorem}[section]
\newtheorem*{thm*}{Theorem}
 \newtheorem*{prop*}{Proposition}
 \newtheorem*{claim*}{Claim}
\newtheorem{cor}[thm]{Corollary}
\newtheorem{lem}[thm]{Lemma}
\newtheorem{prop}[thm]{Proposition}
\theoremstyle{definition}
\newtheorem{defin}[thm]{Definition}
\theoremstyle{remark}
\newtheorem{rema}[thm]{Remark}
\newtheorem{exe}[thm]{Example}
\newenvironment{rem}{\begin{rema}}{\hfill\hspace{1pt}$\triangle$\end{rema}}
\numberwithin{equation}{section} 
\begin{document}
\title[Derived equivalences of Abelian varieties]{Derived equivalences of Abelian varieties and symplectic isomorphisms}

\date{\today}
\thanks {Work supported by research project MTM2013-45935-P (MINECO)}
\subjclass[2000]{Primary: 18E25; Secondary: 18E30, 
14F05, 14J10, 14K99} \keywords{Integral
functors, Fourier-Mukai transforms, Abelian variety, symplectic isomorphism, semihomogeneous sheaf}

\author[A. C. L\'opez Mart\'{\i}n]{Ana Cristina L\'opez Mart\'{\i}n}
\author[C. Tejero Prieto]{Carlos Tejero Prieto}
\email{anacris@usal.es, carlost@usal.es}
\address{Departamento de Matem\'aticas and Instituto Universitario de F\'{\i}sica Fundamental y Matem\'aticas
(IUFFyM), Universidad de Salamanca, Plaza de la Merced 1-4, 37008
Salamanca, Spain.}

\begin{abstract}  
We study derived equivalences of Abelian varieties in terms of their associated symplectic data. For simple Abelian varieties over an algebraically closed field of characteristic zero we prove that the natural correspondence introduced by Orlov, which maps equivalences to symplectic isomorphisms, is surjective.  \end{abstract}

\maketitle
\setcounter{tocdepth}{1}

{\small \tableofcontents }

\section*{Introduction}

Any Abelian variety $A$ has associated to it in a natural way a symplectic Abelian variety $A\times\widehat A$ whose symplectic structure is built out of the normalized Poincar\'e line bundle $\mathcal P$. Therefore we can associate to $A$ the group $\Sp(A\times \widehat A)\subset\Aut(A\times \widehat A)$ of symplectic automorphisms. More generally if we have another Abelian variety $B$ we have the space $\Sp(A\times \widehat A,B\times \widehat B)$ of symplectic isomorphisms. It is remarkable that these spaces of symplectic isomorphisms are deeply related with the spaces of derived equivalences between the Abelian varieties. The first connection was established by Polishchuk \cite{Pol96}, who in the case of an algebraically closed field proved that if there exists a symplectic isomorphism $f\in \Sp(A\times \widehat A,B\times \widehat B)$ then there exists an equivalence $\Phi\colon \cdbc{A}\to\cdbc{B}$; that is, $A$ and $B$ are Fourier-Mukai partners. After that, Orlov \cite{Or02} showed that there exists a natural correspondence that associates to any Fourier-Mukai functor $\Phi^{\cplx{K}}\colon \cdbc{A}\isom  \cdbc{B}$ a symplectic isomorphism $f_{\cplx{K}}\in \Sp(A\times \widehat A,B\times \widehat B)$. He claimed that this correspondence was surjective and with  this hypothesis he determined completely the group of derived equivalences of an Abelian variety as an extension of the symplectic group. More precisely, he showed that for any Abelian variety $A$ over an algebraically closed field of characteristic zero there is an exact sequence $$0\to \mathbb{Z}\oplus(A\times \hat{A})\to \Aut(\cdbc{A})\xrightarrow{\gamma_A} \Sp(A\times\hat{A})\to 1\,.$$  However, in order to prove that $\gamma_A$ is surjective, given a symplectic isomorphism $f\in \Sp(A\times\hat{A},B\times\widehat B)$ with matrix representation $f=\begin{pmatrix} \alpha &\beta\\
 \gamma & \delta
\end{pmatrix}$, one has to build out of it a kernel $\cplx{K}\in \cdbc{A\times B}$ such that $\Phi^{\cplx{K}}\colon \cdbc{A}\isom  \cdbc{B}$ is a derived equivalence and its associated symplectic automorphism $f_{\cplx{K}}$ is $f$. Orlov proves the existence of such a kernel under the assumption that $\beta\colon \widehat A\to B$ is an isogeny and  claims that if $f$ is such that $\beta$ is not an isogeny, then it is easy to see that $f$ can be written as $f=f_2\circ f_1$ where $f_1\in\Sp(A\times\widehat A)$, $f_2\in \Sp(A\times\widehat A,B\times\widehat B)$ and $\beta_1\colon \widehat A\to A$, $\beta_2\colon \widehat A\to B$ are isogenies. As far as we know there is no proof of this result in the literature. The main result of this paper, Theorem \ref{thm:orlov-corrected}, is a proof of the surjectivity of Orlov's correspondence for the building blocks of the category of Abelian varieties, namely simple Abelian varieties. We achieve this goal by studying the structure of symplectic isomorphisms between Abelian varieties over an arbitrary base field. 

The paper is organized as follows. In Section 1 we study symplectic structures on Abelian varieties. We start by motivating it with complex Abelian varieties and after that we do it in full generality following the ideas of Polishchuk with some simplifications due to a result of Grothendieck that identifies biextensions of Abelian schemes with divisorial correspondences. We also allow for fields that are not supposed to be algebraically closed. In Section 2 after recalling some basic facts about Fourier-Mukai functors we describe the kernels of these functors for Abelian varieties and show that they are semihomogeneous sheaves up to a shift. The final part of this section is devoted to the proof of the surjectivity of Orlov's correspondence for simple Abelian varieties.
\subsubsection*{Conventions}

For any scheme $X$ we denote by $\catD(X)$ the
derived category of complexes of $\cO_X$-modules with
quasi-coherent cohomology sheaves. Analogously $\dpc{X}$, $\dmc{X}$
and $\dbc{X}$ denote the derived categories of complexes
which are respectively bounded below, bounded above and bounded on
both sides, and have quasi-coherent cohomology sheaves. The
subscript $c$ refers to the corresponding subcategories of
complexes with coherent cohomology sheaves.

\section{The symplectic group of an Abelian variety}

The study of exact equivalences between derived categories of coherent sheaves $\cdbc{A}$ and $\cdbc{B}$ of two Abelian varieties $A$ and $B$ was carried out by Mukai \cite{Muk94}, Polishchuk \cite{Pol96} and Orlov \cite{Or02}. 

In \cite{Muk94}, Mukai studied the group of derived equivalences $\Aut(\cdbc{A})$ and he defined a discrete group $U(A\times \widehat A)$ that he called the unitary group of the Abelian variety $A$. Independently Polishchuk obtained in \cite{Pol96} the same group but deriving it from a symplectic point of view.

We start by explaining the introduction of this group in the particular case of complex Abelian varieties.

\subsection{A motivating example: the unitary group of a complex Abelian variety}\label{subsec:complex-Abelian-varieties}

Let $A$ be a complex Abelian variety. We denote an endomorphism ${f}\in \End(A\times\widehat A)$ as a matrix $${f}=\begin{pmatrix} \alpha & \beta\\ \gamma & \delta
\end{pmatrix},$$ where $\alpha\in\End(A)$, $\beta\in\Hom(\widehat A,A)$, $\gamma\in\Hom(A,\widehat A)$, $\delta\in\End(\widehat A)$. Each ${f}\in\End(A\times\widehat A)$ determines another endomorphism ${f}^\dagger\in\End(A\times\widehat A)$ whose matrix is $${f}^\dagger=\begin{pmatrix} \hat \delta & -\hat \beta\\ -\hat \gamma & \hat \alpha
\end{pmatrix},$$ where $\hat \alpha$, $\hat \beta$, $\hat \gamma$, $\hat \delta$ denote the transposed morphisms induced between the dual Abelian varieties and we are using the natural identification $k_A\colon A\xrightarrow{\sim}\widehat{\widehat A}$ induced by the normalized Poincar\'e line bundle $\mathcal P$ on $A\times \widehat A$. 

In connection with the study of the group of autoequivalences of the derived category of an Abelian variety, Mukai introduced in \cite{Muk94} the unitary group associated to an Abelian variety, whose definition we recall now.

\begin{defin} The unitary group associated to the Abelian variety $A$ is the group $U(A\times\widehat A)$ consisting of all automorphisms ${f}\in \Aut(A\times\widehat A)$ that satisfy $${f}^\dagger\circ{f}={f}\circ{f}^\dagger=\Id_{A\times\widehat A}.$$
\end{defin} 
\bigskip

\begin{rem} The unitary group has a natural involution $$\ddagger\colon U(A\times\widehat A)\to U(A\times\widehat A)$$ that maps $f=\begin{pmatrix} \alpha & \beta\\ \gamma & \delta
\end{pmatrix}$ to $f^\ddagger=\begin{pmatrix} \alpha & -\beta\\ -\gamma & \delta
\end{pmatrix}$. We will explain later the geometric origin of this involution.
\end{rem}

In his paper Mukai did not explain the reason for calling this a ``unitary'' group. We will show now that in fact $U(A\times\widehat A)$ is the unitary group of a certain Hermitian metric naturally associated to the Abelian variety. We will also see that it also can be understood in terms of symplectic geometry. In order to do this we need first to introduce some basic definitions that will also serve  to fix our notation.

A complex torus $A$ of dimension $g$ is defined by the following data: a free $\bbZ$-module $\Lambda$ of rank $2g$ and a complex structure $J\in\End_\bbR(V)$, $J^2=-\Id_V$, where $V=\bbR\otimes_\bbZ \Lambda$. Then $$A=(V/\Lambda,J)$$ is a $g$-dimensional complex torus. Notice that we have the canonical isomorphisms $$\Lambda=H_1(A,\bbZ),\quad V=H_1(A,\bbR).$$ We will add the subscript ``A'' to $\Lambda$, $V$, $J$ when we need to distinguish between different complex tori.

Given another complex torus $B=(V_B/\Lambda_B,J_B)$ a homomorphism ${f}\in\Hom(A,B)$ is defined by a $\bbZ$-module homomorphism $T\colon \Lambda_A\to\Lambda_B$ such that $$J_B\circ T_\bbR=T_\bbR\circ J_A\colon V_A\to V_B.$$ Therefore the Abelian group $\Hom(A,B)$ can be considered as a subgroup of $\Hom_\bbZ(\Lambda_A,\Lambda_B)$ giving rise to the rational representation $$\rho_r\colon \Hom(A,B)\hookrightarrow \Hom_\bbZ(\Lambda_A,\Lambda_B)$$ which maps ${f}\in\Hom(A,B)$ to $\rho_r({f})={f}_r=T\in\Hom_\bbZ(\Lambda_A,\Lambda_B)$. In the same way, we have the analytic representation $$\rho_a\colon \Hom(A,B)\hookrightarrow \Hom_\bbC((V_A,J_A),(V_B,J_B))$$ which maps ${f}\in\Hom(A,B)$ to $\rho_a({f})={f}_a=T_\bbR\in\Hom_\bbR(V_A,V_B)$. Notice that the rational representation is the restriction $({f})_a|_{\Lambda_A}=({f})_r$.

The dual torus $\widehat A$ is defined as follows. We set $\widehat V=\Hom_{\overline\bbC}((V,J),\bbC)$, the complex vector space of $\bbC$-antilinear forms endowed with its natural complex structure $\widehat J$ and we put $\widehat\Lambda:=\{\widehat\theta\in\widehat V\colon \Img\widehat\theta(\Lambda)\subseteq\bbZ\}$. Then we have $$\widehat A=(\widehat V/\widehat\Lambda,\widehat J).$$
Notice that the underlying real vector space of $\widehat V$ is naturally isomorphic to the real dual vector space $V^*=\Hom_\bbR(V,\bbR)$. The isomorphism is given by sending $\widehat\theta\in\widehat V$ to $\Img\widehat\theta\in V^*$. This gives an identification of $\widehat J$ with $-J^*$ and of  $\widehat\Lambda$ with $\Lambda^*=\Hom_\bbZ(\Lambda,\bbZ)$. Therefore $$\widehat A\simeq (V^*/\Lambda^*,-J^*).$$

Given a morphism of complex tori  ${f}\colon A\to B$, one easily checks that the transposed homomorphism $\widehat{f}\colon \widehat B\to\widehat A$ has an analytic representation $(\widehat{f})_a\colon \widehat V_B\to\widehat V_A$  given by $$(\widehat{f})_a=(({f})_a)^*.$$

In a similar way one shows that the natural isomorphism $k_A\colon A\xrightarrow{\sim}\widehat{\widehat A}$ has an analytic representation $(k_A)_a\colon V\xrightarrow{\sim}\widehat{\widehat V}$ that maps $v\in V$ to the complex antilinear form $\eta_v=(k_A)_a(v)\in \Hom_{\overline\bbC}(\widehat V,\bbC)$ defined by $\eta_v(\widehat\theta):=\overline{\widehat\theta(v)}$ for every $\widehat\theta\in \widehat V$.

By means of the natural isomorphisms $k_A\colon A\xrightarrow{\sim}\widehat{\widehat A}$, $k_B\colon B\xrightarrow{\sim}\widehat{\widehat B}$, one easily checks that the analytic representation of the doubly transposed morphism $\widehat{\widehat{f}}\colon\widehat{\widehat A}\to\widehat{\widehat B}$ gets identified with $({f})_a\colon V_A\to V_B$.

On $A\times \widehat A$ we have the normalized Poincar\'e bundle $\mathcal P$ which can be described in terms of its Appell--Humbert data $\mathcal P=L(H,\chi)$, where $H$ is the non-degenerate Hermitian form defined on the complex vector space $V\times \widehat V$  by $$H((v_1,\widehat\theta_1),(v_2,\widehat\theta_2))=\widehat\theta_1(v_2)+\overline{\widehat\theta_2(v_1)}$$ and $\chi\colon \Lambda\times\widehat\Lambda \to U(1)$ is the semicharacter  $\chi((\lambda,\widehat\lambda))=\exp(\pi i\Img(\widehat\lambda)(\lambda))$.

Let us recall that for any Hermitian form $H$ on a complex vector space $(V,J)$, its real part $g=\Rea H$ is a symmetric real bilinear form on $V$ and its imaginary part $\omega=\Img H$ is an alternating real $2$-form on $V$. Moreover, both of them are $J$-invariant and they determine each other by the following formulae: $$g(v_1,v_2)=\omega(J(v_1),v_2),\quad \omega(v_1,v_2)=g(v_1,J(v_2)),\quad \text{for any}\ v_1,v_2\in V.$$ This shows that $H$ is non-degenerate if and only if $g$ is non degenerate if and only if $\omega$ is a symplectic form.

The unitary group of a non degenerate Hermitian complex vector space $(V,H)$ is $$\U(V,H)=\{T\in\End_\bbC(V)\colon T^*H=H\}.$$ In the same way we have the real orthogonal group $$\Or(V,g)=\{T\in\End_\bbR(V)\colon T^*g=g\}$$ and the real symplectic group $$\Sp(V,\omega)=\{T\in\End_\bbR(V)\colon T^*\omega=\omega\}.$$ It is well known that $$\U(V,H)=\Or(V,g)\cap\Sp(V,\omega)=\Or(V,g)\cap\Aut_\bbC(V)=\Sp(V,\omega)\cap\Aut_\bbC(V).$$

The analytic representation allows us to identify $U(A\times\widehat A)$ with a subgroup of $\Aut_\bbC(V\times \widehat V)$. More precisely one has:

\begin{prop}Let $A$ be a complex Abelian variety and let $\mathcal P$ be the normalized Poincar\'e line bundle on $A\times\widehat A$ with Appell--Humbert data $\mathcal P=L(H,\chi)$. The analytic representation $\rho_a\colon \Aut(A\times\widehat A)\hookrightarrow \Aut_\bbC(V\times\widehat V)$ identifies the unitary group $U(A\times\widehat A)$ associated to the Abelian variety $A$ with the unitary group $U(V\times\widehat V,H)$.
\end{prop}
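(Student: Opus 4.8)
The plan is to push everything through the analytic representation and reduce the proposition to the single identity $\rho_a(f^\dagger)=\rho_a(f)^{\ast}$, where $(\,\cdot\,)^{\ast}$ denotes the adjoint with respect to $H$ (equivalently, since $\rho_a(f)$ is automatically $\bbC$-linear, the adjoint with respect to $\omega=\Img H$). To establish it, write $f=\begin{pmatrix}\alpha&\beta\\\gamma&\delta\end{pmatrix}\in\Aut(A\times\widehat A)$, so that $\rho_a(f)=\begin{pmatrix}\alpha_a&\beta_a\\\gamma_a&\delta_a\end{pmatrix}\in\Aut_\bbC(V\times\widehat V)$, and compute the analytic representation of each of the four blocks $\hat\delta,-\hat\beta,-\hat\gamma,\hat\alpha$ of $f^\dagger$ using only the two facts recorded above: that $(\widehat g)_a=(g_a)^{\ast}$ and that $(k_A)_a$ sends $v$ to $\eta_v$ with $\eta_v(\widehat\theta)=\overline{\widehat\theta(v)}$. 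For instance, unwinding $\hat\delta=k_A^{-1}\circ\widehat\delta\circ k_A$ shows that $(\hat\delta)_a$ is the unique $\bbC$-linear map $V\to V$ with $\widehat\theta\big((\hat\delta)_a v\big)=(\delta_a\widehat\theta)(v)$ for all $\widehat\theta\in\widehat V$ and $v\in V$, and the remaining three blocks are handled the same way. On the other side, the block entries of $\rho_a(f)^{\ast}$, defined by $H\big(\rho_a(f)u,w\big)=H\big(u,\rho_a(f)^{\ast}w\big)$, are pinned down by four identities of exactly the same shape, obtained by expanding $H$ on $\rho_a(f)(v,\widehat\theta)$ against $(v',\widehat\theta')$ and grouping the resulting four types of term; matching the two lists gives $\rho_a(f^\dagger)=\rho_a(f)^{\ast}$. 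In particular the $\pm$ signs in the matrix of $f^\dagger$ are precisely those dictated by the explicit shape of $H$; in the symplectic reading this is the statement that $\dagger$ is the $\omega$-adjoint, and the involution $\ddagger$ appears as conjugation by $\operatorname{diag}(\Id_V,-\Id_{\widehat V})$.

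Granting $\rho_a(f^\dagger)=\rho_a(f)^{\ast}$, the proposition is formal. Since $\rho_a$ is an injective group homomorphism whose image lies in $\Aut_\bbC(V\times\widehat V)$, for any $f\in\Aut(A\times\widehat A)$ we have $f^\dagger\circ f=f\circ f^\dagger=\Id_{A\times\widehat A}$ if and only if $\rho_a(f)^{\ast}\circ\rho_a(f)=\rho_a(f)\circ\rho_a(f)^{\ast}=\Id_{V\times\widehat V}$, which says exactly that $\rho_a(f)^{\ast}H=H$, i.e. $\rho_a(f)\in\U(V\times\widehat V,H)$ (recall that $\rho_a(f)$ is already $\bbC$-linear, so $\Sp\cap\Aut_\bbC$, $\Or\cap\Aut_\bbC$ and $\U$ coincide on it). Hence $\rho_a$ restricts to an injective homomorphism $U(A\times\widehat A)\hookrightarrow\U(V\times\widehat V,H)$ with $\rho_a^{-1}\big(\U(V\times\widehat V,H)\big)=U(A\times\widehat A)$, which is the asserted identification.

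The one real obstacle is the computation of the first paragraph, and the difficulty there is purely one of sign bookkeeping: one must keep track simultaneously of the $\bbR$-linear identification $\widehat V\cong V^{\ast}$, $\widehat\theta\mapsto\Img\widehat\theta$ (which trades $\widehat J$ for $-J^{\ast}$), of the identification $(k_A)_a\colon V\xrightarrow{\sim}\widehat{\widehat V}$, and of the behaviour of $g\mapsto\widehat g$ under composition, so that the signs produced by the matrix of $f^\dagger$ agree with — rather than fight against — the signs produced by the explicit formula for $H$. Making these signs close up is exactly what the particular normalization built into the Poincar\'e bundle $\mathcal P=L(H,\chi)$ — hence into $H$, into the semicharacter $\chi$, and into $k_A$ — is there to guarantee.
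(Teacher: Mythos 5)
Your proposal is correct and follows essentially the same route as the paper's proof: both exploit that $\rho_a(f)$ is automatically $\bbC$-linear (so that $\U=\Sp\cap\Aut_\bbC$ applies) and then reduce everything to a direct computation with the analytic representations of $\widehat{(\cdot)}$ and $k_A$. Your key identity $\rho_a(f^\dagger)=\rho_a(f)^{\ast}$ is precisely the content of the ``straightforward computation'' that the paper phrases as ``$\rho_a(f)\in\Sp(V\times\widehat V,\omega)$ if and only if $(f^\dagger)_a\circ(f)_a=(f)_a\circ(f^\dagger)_a=\Id$''; you merely make the block-by-block bookkeeping more explicit.
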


\begin{proof} If $H$ is the non-degenerate Hermitian form of the Apple-Humbert data of the normalized Poincar\'e bundle $\mathcal P=L(H,\chi)$, then its associated symplectic form $\omega$ is given by \begin{equation} \label{eq:symp-form}
\omega((v_1,\widehat\theta_1),(v_2,\widehat\theta_2))=\Img(\widehat\theta_1)(v_2)-\Img(\widehat\theta_2)(v_1).
\end{equation}
  Given ${f}\in U(A\times\widehat A)$ its analytic representation $\rho_a({f})\in \Aut_\bbC(V\times\widehat V)$ is complex linear. Therefore, it is enough to show that $$\rho_a(U(A\times\widehat A))\subset \Sp(V\times\widehat V,\omega).$$ Using the identifications previously explained in this section, especially the analytic representation of the natural isomorphism $k_A\colon A\xrightarrow{\sim}\widehat{\widehat A}$, a straightforward computation shows that $(\rho)_a(f)\in\Sp(V\times\widehat V,\omega)$ if and only if $$({f}^\dagger )_a\circ ({f})_a=({f})_a\circ({f}^\dagger)=\Id_{V\times\widehat V},$$ and so the claim follows.\end{proof}

Notice that the symplectic form $\omega$ introduced in \ref{eq:symp-form}, defines a $\bbZ$-valued alternating $2$-form on the lattice $\Lambda\times \widehat\Lambda$. Therefore one has the following:

\begin{cor} The rational representation  identifies the unitary group $U(A\times\widehat A)$ associated to the Abelian variety $A$ with the symplectic group $$\Sp(\Lambda\times\widehat \Lambda,\omega)\cap\rho_r(\Aut(A\times\widehat A)).$$
\end{cor}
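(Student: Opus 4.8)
The plan is to derive the Corollary directly from the preceding Proposition, using two observations: the rational representation $\rho_r$ is merely the restriction of the analytic representation $\rho_a$ to the lattice $\Lambda\times\widehat\Lambda$, and the symplectic condition for an automorphism can be tested on that lattice.

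First I would record an elementary linear–algebra remark. Since $\Lambda\otimes_\bbZ\bbR\xrightarrow{\sim}V$ for both $A$ and $\widehat A$, the lattice $\Lambda\times\widehat\Lambda$ spans $V\times\widehat V$ over $\bbR$. Hence (i) the map sending an $\bbR$-linear automorphism of $V\times\widehat V$ that preserves $\Lambda\times\widehat\Lambda$ to its restriction in $\Aut_\bbZ(\Lambda\times\widehat\Lambda)$ is injective; in particular $\rho_r$ is injective and determines $\rho_a$. And (ii) an $\bbR$-bilinear form on $V\times\widehat V$ is determined by its restriction to $\Lambda\times\widehat\Lambda$; combined with the fact (noted after \eqref{eq:symp-form}) that $\omega$ is $\bbZ$-valued on $\Lambda\times\widehat\Lambda$, this gives, for every $f\in\Aut(A\times\widehat A)$,
$$\rho_a(f)\in\Sp(V\times\widehat V,\omega)\iff \rho_r(f)\in\Sp(\Lambda\times\widehat\Lambda,\omega).$$

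Next I would assemble the chain of equivalences. For any $f\in\Aut(A\times\widehat A)$ the automorphism $\rho_a(f)$ is $\bbC$-linear, so it always lies in $\Aut_\bbC(V\times\widehat V)$; therefore, by the preceding Proposition together with the identity $\U(V\times\widehat V,H)=\Sp(V\times\widehat V,\omega)\cap\Aut_\bbC(V\times\widehat V)$,
$$f\in U(A\times\widehat A)\iff \rho_a(f)\in\U(V\times\widehat V,H)\iff \rho_a(f)\in\Sp(V\times\widehat V,\omega).$$
By the previous paragraph this is equivalent to $\rho_r(f)\in\Sp(\Lambda\times\widehat\Lambda,\omega)$, and since $\rho_r(f)\in\rho_r(\Aut(A\times\widehat A))$ automatically, also to $\rho_r(f)\in\Sp(\Lambda\times\widehat\Lambda,\omega)\cap\rho_r(\Aut(A\times\widehat A))$. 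Thus $\rho_r$ sends $U(A\times\widehat A)$ into that intersection; conversely any element of the intersection equals $\rho_r(f)$ for some $f\in\Aut(A\times\widehat A)$, and reading the equivalences backwards forces $f\in U(A\times\widehat A)$. Injectivity of $\rho_r$ from (i) then upgrades this to the asserted identification of groups.

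The only point demanding genuine care — though it amounts to bookkeeping rather than a real difficulty — is the passage between $\Lambda\times\widehat\Lambda$ and $V\times\widehat V$: one must verify both that $\rho_r$ stays injective, so that ``identifies'' means a true group isomorphism, and that invariance of $\omega$ on the lattice is equivalent to invariance on the whole real vector space. I expect no obstacle beyond this; the substance of the statement is already contained in the preceding Proposition.
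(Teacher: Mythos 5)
Your argument is correct and is exactly the (implicit) argument of the paper, which deduces the Corollary from the preceding Proposition together with the observation that $\omega$ restricts to a $\bbZ$-valued alternating form on $\Lambda\times\widehat\Lambda$; the paper omits the details you spell out. Your bookkeeping — that $\rho_r$ is the restriction of $\rho_a$ to a lattice spanning $V\times\widehat V$ over $\bbR$, so that preservation of $\omega$ on the lattice is equivalent to preservation on the real vector space — is precisely what makes the deduction work.
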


Therefore, we see that the unitary group of a complex Abelian variety is also a symplectic group and thus it is reasonable to also denote it by $\Sp(A\times\widehat A)$ and call it the symplectic group of the Abelian variety $A$. Moreover, it is the symplectic point of view, rather than the unitary, that is the correct one to generalize to other fields.
We will explore this connection further in the next section when we explain the approach followed by Polishchuk for introducing the symplectic group associated to an Abelian variety over an arbitrary field. 
\begin{rem} Notice that the definition of the unitary or symplectic group can be given for any complex torus. The results obtained above continue to hold in this more general setting.
\end{rem}

The above constructions can be extended to a pair of complex Abelian varieties $A$, $B$. As before, any isomorphism $f\colon A\times \hat{A}\isom B\times\hat{B}$ can be written as a matrix  $\begin{pmatrix} \alpha &\beta\\
 \gamma & \delta
\end{pmatrix}$  where $\alpha\colon A\to B$, $\beta\colon \hat{A}\to B$, etc. are morphisms of Abelian varieties. One defines the isomorphism $$f^\dag\colon B\times \hat{B}\isom A\times\hat{A}\, ,$$ with matrix $\begin{pmatrix} \hat{\delta}&-\hat{\beta}\\
-\hat{\gamma} & \hat{\alpha}
\end{pmatrix}$.
We denote by $U(A\times\hat{A}, B\times\hat{B})$ the space formed by those isomorphisms $f\colon A\times\hat{A}\isom B\times \hat{B}$ that are isometric, that is, such that $f^\dag=f^{-1}$. The reason for this terminology is explained in the following Proposition, whose proof is similar to the case of a single Abelian variety. 

\begin{prop} Let us consider two complex Abelian varieties $A$, $B$ and let $\mathcal{P}_A$,  $\mathcal{P}_B$ be their normalized Poincar\'e line bundles with Appell--Humbert data $\mathcal P_A=L(H_A,\chi_A)$,  $\mathcal P_B=L(H_B,\chi_B)$. The analytic representations identify the space of isometric isomorphisms $U(A\times\widehat A, B\times\widehat B)$  with the space of unitary isomorphisms $$U((V_A\times\widehat V_A,H_A), (V_B\times\widehat V_B,H_B)).$$
\end{prop}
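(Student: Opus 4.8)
The plan is to follow, almost verbatim, the argument given above for a single Abelian variety, now with $A\times\widehat A$ playing its rôle on the source side and $B\times\widehat B$ on the target side.

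First, since $f\colon A\times\widehat A\isom B\times\widehat B$ is by definition an isomorphism of complex tori, its analytic representation $f_a=\rho_a(f)\colon V_A\times\widehat V_A\to V_B\times\widehat V_B$ is an isomorphism of complex vector spaces. I would next record, exactly as before, that for a $\bbC$-linear isomorphism $T\colon V_A\times\widehat V_A\to V_B\times\widehat V_B$ one has $T^*H_B=H_A$ if and only if $T^*\omega_B=\omega_A$: writing $H_A=g_A+i\omega_A$ and $H_B=g_B+i\omega_B$, using the relations $g(v_1,v_2)=\omega(J(v_1),v_2)$ recalled above applied to $H_A$ and to $H_B$, and using the fact that $T$ intertwines the complex structures, the condition $T^*\omega_B=\omega_A$ forces $T^*g_B=g_A$ and hence $T^*H_B=H_A$, the converse implication being obvious. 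This reduces the statement to proving that $f\in U(A\times\widehat A,B\times\widehat B)$ if and only if $f_a^*\omega_B=\omega_A$.

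The core of the proof is then a direct computation of $f_a^*\omega_B$. I would use formula \eqref{eq:symp-form} --- which, under the real identifications $\widehat\theta\mapsto\Img\widehat\theta$, presents $\omega_A$ and $\omega_B$ as the standard symplectic forms on $V_A\oplus V_A^*$ and $V_B\oplus V_B^*$ --- together with two facts established above: that the analytic representation of a transposed morphism $\widehat g$ is the dual map $(g_a)^*$, and that the analytic representations of $k_A\colon A\isom\widehat{\widehat A}$ and $k_B\colon B\isom\widehat{\widehat B}$ are the maps $v\mapsto\eta_v$. Writing $f$ blockwise through $(\alpha,\beta,\gamma,\delta)$ and $f^\dag$ blockwise through $(\widehat\delta,-\widehat\beta,-\widehat\gamma,\widehat\alpha)$, the same matrix manipulation as in the single Abelian variety case will show that $f_a^*\omega_B=\omega_A$ holds precisely when $(f^\dag)_a\circ f_a=\Id_{V_A\times\widehat V_A}$ and $f_a\circ(f^\dag)_a=\Id_{V_B\times\widehat V_B}$. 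Since the analytic representation is faithful, this is in turn equivalent to $f^\dag\circ f=\Id_{A\times\widehat A}$ and $f\circ f^\dag=\Id_{B\times\widehat B}$, i.e. to $f^\dag=f^{-1}$, which is exactly the condition defining $U(A\times\widehat A,B\times\widehat B)$. Combining this with the reduction of the previous paragraph shows that an isomorphism $f$ of complex tori lies in $U(A\times\widehat A,B\times\widehat B)$ if and only if $f_a$ is a unitary isomorphism, which is the asserted identification.

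The only genuine difficulty is the bookkeeping in this last computation: one has to keep track of the two distinct canonical isomorphisms $k_A$ and $k_B$, the two real identifications $\widehat V_A\simeq V_A^*$ and $\widehat V_B\simeq V_B^*$, and the several double-dual identifications, and check that the signs introduced by the conjugations in $\eta_v(\widehat\theta)=\overline{\widehat\theta(v)}$ cancel correctly. This is of exactly the same nature as the verification underlying the single Abelian variety case, so no new phenomenon arises and the proof goes through essentially unchanged.
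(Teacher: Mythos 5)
Your proposal is correct and matches the paper's intent exactly: the paper gives no separate argument for this proposition, stating only that "the proof is similar to the case of a single Abelian variety," and your write-up is precisely that single-variety argument (reduce unitarity to symplecticity via complex-linearity of $f_a$ and the relation between $H$, $g$ and $\omega$, then compute that $f_a^*\omega_B=\omega_A$ is equivalent to $f^\dagger\circ f=\Id_{A\times\widehat A}$, $f\circ f^\dagger=\Id_{B\times\widehat B}$) carried over to the two-variety setting. No discrepancy to report.
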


\begin{rem} As in the case of a single Abelian variety, there is an involution $$\ddagger\colon U(A\times\hat{A}, B\times\hat{B})\to U(A\times\hat{A}, B\times\hat{B})$$ that maps $f=\begin{pmatrix} \alpha & \beta\\ \gamma & \delta
\end{pmatrix}$ to $f^\ddagger=\begin{pmatrix} \alpha & -\beta\\ -\gamma & \delta
\end{pmatrix}$.
\end{rem}

\subsection{Abelian varieties over arbitrary fields: symplectic biextensions}

For general Abelian varieties Polishchuk, motivated by the classical theory of Heisenberg groups, introduced a symplectic formalism based on the notion of symplectic biextensions by the multiplicative group, see \cite{Pol96} and \cite[Chapter 10]{Pol03}. The notion of biextension was introduced by Mumford \cite{Mum69} in the context of formal groups and was later systematically studied by Grothendieck \cite{Gro72} in the framework of sheaves of groups over a site. We recall now the definitions of these concepts following \cite{Bre83}.

\begin{defin}Let $G_1$, $G_2$, $H$ be commutative group schemes over a base scheme $S$. A biextension of $G_1\times_S G_2$ by $H$ is an $H$-torsor $E$ over $G_1\times G_2$ together with sections of the induced $H$-torsors $$(m_{G_1}\times 1)^*E\wedge p_{13}^*E^{-1}\wedge p_{23}^*E^{-1},\ \text{and}\ (1\times m_{G_2})^*E\wedge p_{12}^*E^{-1}\wedge p_{13}^*E^{-1}$$ over $G_1\times G_1\times G_2$ and 
$G_1\times G_2\times G_2$, respectively.
\end{defin}

In addition these data must satisfy certain cocycle compatibility conditions, see \cite{Bre83} for the precise details. What is important for us is the following result of Grothendieck \cite[Corollaire 2.9.4, Example 2.9.5]{Gro72}:

\begin{prop}\label{prop:biextension-correspondences} Let $S$ be a scheme, $A$ and $B$ two Abelian schemes over $S$ and $\mathbb G_{m\, S}$ the multiplicative group scheme over $S$. Then there is a bijective correspondence between the space $\Biext(A\times_SB;\mathbb G_{m\,S})$ of biextensions of $A\times_S B$ by $\mathbb G_{m\, S}$ and the group $\Corr_S(A,B)$ of divisorial correspondences on $A\times_S B$.
\end{prop}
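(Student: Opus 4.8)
The plan is to describe both sides explicitly and then move between them by means of the theorem of the cube; this is in essence the content of \cite[Exp.~VII]{Gro72}, and I sketch it for completeness. Recall that a divisorial correspondence on $A\times_S B$ is a line bundle $\calL$ on $A\times_S B$ equipped with rigidifications $e_A^*\calL\simeq\cO_B$ and $e_B^*\calL\simeq\cO_A$ that agree over $S=\{e_A\}\times_S\{e_B\}$, and that these correspondences form a group $\Corr_S(A,B)$ under tensor product of line bundles. On the other hand, a $\mathbb G_{m\,S}$-torsor over $A\times_S B$ is the same datum as a line bundle on $A\times_S B$, so a biextension of $A\times_S B$ by $\mathbb G_{m\,S}$ is a line bundle $\calL$ together with the two sections that implement the partial group laws in each variable; these biextensions form a group under the natural sum of biextensions.

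First I would pass from a biextension to a divisorial correspondence. Viewing the underlying torsor $E$ as a line bundle $\calL$, I restrict the first additivity section -- a section of $(m_A\times 1)^*E\wedge p_{13}^*E^{-1}\wedge p_{23}^*E^{-1}$ over $A\times_S A\times_S B$ -- along the slice $\{e_A\}\times_S A\times_S B$. Since $m_A(e_A,-)=\mathrm{id}$, the restricted torsor is canonically the pullback of $E^{-1}|_{\{e_A\}\times_S B}$, so by rigidity this section is exactly a rigidification of $\calL$ along $\{e_A\}\times_S B$. Restricting the second additivity section along $A\times_S\{e_B\}\times_S B$ likewise yields a rigidification along $A\times_S\{e_B\}$, and the cocycle identities force the two to agree over $S$. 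This gives a divisorial correspondence, functorially in $E$.

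For the converse, starting from a divisorial correspondence $\calL$ I would form the line bundle $N=(m_A\times 1)^*\calL\otimes p_{13}^*\calL^{-1}\otimes p_{23}^*\calL^{-1}$ on $A\times_S A\times_S B$. A direct computation shows that restricting $N$ along each of the three identity-section slices $\{e_A\}\times_S A\times_S B$, $A\times_S\{e_A\}\times_S B$ and $A\times_S A\times_S\{e_B\}$ produces, via the two given rigidifications of $\calL$, a canonically trivial line bundle, the three trivializations being compatible at the common point $(e_A,e_A,e_B)$. The theorem of the cube for the abelian schemes $A$, $A$, $B$ over $S$ then forces $N$ itself to be trivial, and rigidity -- $H^0(A,\cO_A)=\cO_S$ over connected $S$ -- shows that there is a unique trivialization of $N$ restricting to the chosen ones along the slices. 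This trivialization is the first additivity section; interchanging the roles of $A$ and $B$ produces the second. Finally I would check that the two constructions are mutually inverse -- the first construction applied after the second recovers the original rigidifications by the slice computation just made, and the second after the first recovers the original sections by the uniqueness just used -- and that both are compatible with the group laws, so that the correspondence is a group isomorphism.

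The main obstacle I expect is the bookkeeping in the converse direction: checking that the sections manufactured there genuinely satisfy all the biextension cocycle conditions, namely the associativity of each partial group law and the compatibility between the two. Each of these is an identity between global sections of a line bundle on a product of several copies of $A$ and $B$ which the theorem of the cube renders trivial, and whose two sides have the same restriction along the identity sections; rigidity then makes the identity automatic. Carrying out these reductions carefully, keeping track of which slices are relevant for each condition, is the only real work.
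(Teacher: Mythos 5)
The paper gives no proof of this proposition: it is quoted directly from Grothendieck \cite[Corollaire 2.9.4, Example 2.9.5]{Gro72}, and your sketch is precisely the argument of that reference --- restricting the partial group laws along the unit slices to get the rigidifications, and conversely using the theorem of the cube for Abelian schemes plus rigidity to manufacture the additivity sections and verify the cocycle identities. Your proposal is correct and takes the same route as the cited source.
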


Let us recall that given two Abelian schemes $A$, $B$ over $S$, a divisorial correspondence from $A\to S$ to $B\to S$ is just a line bundle $L$ on $A\times_S B$ together with rigidifications $$\phi_A\colon s_A^*L\xrightarrow{\sim}\mathcal O_A,\quad \phi_B\colon s_B^*L\xrightarrow{\sim}\mathcal O_B$$ that coincide along the unit section $e_{A\times_S B}$ of $A\times_S B\to S$ $$e_A^*(\phi_A)=e_B^*(\phi_B)\colon e_{A\times_S B}^*L\xrightarrow{\sim}\mathcal O_S$$ where $s_A=(\Id_A, e_B)\colon A\to A\times_S B$, $s_B=(e_A, \Id_B)\colon B\to A\times_SB$ are the natural sections induced by the unit sections $e_A, e_B$ of $A$ and $B$, respectively.

It is well known, see \cite[Chapter 6, \S 2]{GIT},  that any divisorial correspondence $L\in\Corr_S(A,B)$ induces a morphism of Abelian schemes $\psi_L\colon A\to \widehat B$.

The approach followed by Polishchuk relied heavily on the machinery of biextensions. Rather than using it, we will use the equivalence result of Grothendieck to base our presentation in the much more concrete language of divisorial correspondences.

We start by expressing the notion of symplectic biextension introduced by Polishchuk in terms of correspondences.

\begin{defin} Let $X$ be an Abelian variety over a field $\Bbbk$. A skew-symmetric biextension of $X\times X$ is a correspondence $L\in \Corr_\Bbbk(X,X)$ together with an isomorphism of correspondences $$\varphi\colon \sigma^*L\xrightarrow{\sim}L^{-1}$$ where $\sigma\colon X\times X\to X\times X$ is the automorphism defined by permutation of factors, and a rigidification $\delta^*L\xrightarrow{\sim}\mathcal O_X$ over the diagonal immersion $\delta\colon X\to X\times X$ compatible with $\varphi$. The compatibility condition means that pulling-back the isomorphism $\varphi:\sigma^* L\xrightarrow{\sim} L^{-1}$ under  the diagonal immersion $\delta\colon X\to X\times X$, we obtain an isomorphism $\delta^*(\varphi) \colon \delta^*(L) \xrightarrow{\sim} \delta^*(L^{-1})$ that must be compatible with the given rigidification of $\rho^*(L)$, $\rho\colon \delta^*(L)\xrightarrow{\sim} \mathcal O_X$, and the natural rigidification of $\rho^*(L^{-1})$ induced by it, $\rho'\colon \delta^*(L^{-1})\xrightarrow{\sim} \mathcal O_X$. That is, one must have $$\rho=\rho'\circ \delta^*(\varphi).$$
A skew-symmetric biextension $L$ is called symplectic if the induced morphism $\psi_L\colon X\to\widehat X$ is an isomorphism and in this case we call the pair $(X,L)$ a symplectic Abelian variety.
\end{defin}

\begin{rem}\label{rem:isomorphi-correspondences} If $L\in\Corr_\Bbbk(X,X)$ is skew-symmetric, then $\widehat\psi_L=-\psi_L$. Moreover, given another $L'\in\Corr_\Bbbk(X,X)$ it is easy to see that one has  $\widehat\psi_L=\widehat\psi_{L'}$ if and only if $L$ and $L'$ are isomorphic line bundles.
\end{rem}

Once we have the notion of symplectic Abelian variety, we can mimic some of the definitions and constructions of symplectic geometry. 

\begin{defin} Let $(X,L)$ be a symplectic Abelian variety. We say that an Abelian subvariety $Y\subset X$ is isotropic if there exists an isomorphism of skew-symmetric biextensions  $L|_{Y\times Y}\simeq \mathcal O_{Y\times Y}$.
\end{defin}

\begin{rem} Notice that $Y\subset X$ is isotropic if and only if the composition $$Y\xrightarrow{i} X\xrightarrow{\psi_L} \widehat X\xrightarrow{\widehat i}\widehat Y$$ is zero.
\end{rem}

Let us recall that for any Abelian subvariety $i\colon A'\hookrightarrow A$ the theory of fppf quotients can be used to show that there exists a quotient $\pi\colon A\to A/A'$ such that $A/A'$ is an Abelian variety \cite[Lemma 1.7.4.4]{Lifting}. Moreover, proceeding as in \cite[Section 9.5]{Pol03} one shows that $\widehat i\colon \widehat A\to \widehat{A'}$ is surjective and $\widehat{A/A'}$ is contained in $\Ker\widehat i$. Therefore it makes sense to give the following:

\begin{defin} Let $(X,L)$ be a symplectic Abelian variety. We say that an Abelian subvariety $i\colon Y\hookrightarrow X$ is Lagrangian if it is isotropic and the morphism $Y\to \Ker\widehat i$ defined by $\psi_L$ induces an isomorphism $Y\xrightarrow{\sim} \widehat{X/Y}$. 
\end{defin}

If $A$ is an Abelian variety then $X=A\times \widehat A$ has a natural symplectic biextension that, as we will see in a moment, allows us to generalize to any Abelian variety the symplectic structure (\ref{eq:symp-form}) introduced for a complex Abelian variety. On $X\times X=A\times\widehat A \times A\times\widehat A$ we define $$L=p_{14}^*\mathcal P\otimes p_{32}^*\mathcal P^{-1}$$ where $\mathcal P$ is the normalized Poincar\'e line bundle on $A\times \widehat A$. It is well known that $\mathcal P$ is a divisorial correspondence on $A\times \widehat A$ and from this it follows easily that $L\in \Corr_\Bbbk(X\times\widehat X)$. Taking into account Proposition \ref{prop:biextension-correspondences} we get that  $L$ is a biextension of $X\times X$ and it is straightforward to check that it is skew-symmetric. Moreover, using the natural identification $k_A\colon A\xrightarrow{\sim} \widehat{\widehat A}$ induced by the Poincar\'e line bundle, one easily gets that the morphism $$\psi_L\colon A\times\widehat A\to \widehat A\times A$$ is given by $\psi_L(a,\hat a)=(-\hat a,a)$. Therefore we conclude that $L$ is a symplectic biextension.

\begin{defin} Let $A$ be an Abelian variety with Poincar\'e line bundle $\mathcal P$ over $X=A\times \widehat A$ and let $L=p_{14}^*\mathcal P\otimes p_{32}^*\mathcal P^{-1}$. We say that the pair $(X,L)$  is the natural symplectic data associated to $A$.
\end{defin}

\begin{rem} Let $A=(V/\Lambda,J)$ be a complex Abelian variety. Let us set $W=V\times\widehat V$, then $\widehat W=\widehat V\times V$ and we have a natural duality pairing $$\langle\,\ ,\ \rangle\colon W \times \widehat W\to\bbC$$  defined by $\langle(v_1,\widehat\theta_1),(\widehat\theta_2,v_2)\rangle=\widehat\theta_1(v_2)+\widehat\theta_2(v_1)$ for  $v_1,v_2\in V$, $\widehat\theta_1,\widehat\theta_2\in \widehat V$. Then the symplectic form $\omega$ defined in \ref{eq:symp-form} can be expressed as $$\omega=\Img(\langle\,\ ,\ \rangle\circ(\Id_W\times(\psi_L)_a)).$$ Since according to Remark \ref{rem:isomorphi-correspondences}, $\psi_L$ is completely determined by $L$, we can think of the symplectic biextension $L=p_{14}^*\mathcal P\otimes p_{32}^*\mathcal P^{-1}$ as an abstract incarnation of the symplectic form $\omega$ valid for any Abelian variety defined over an arbitrary field. \end{rem}

Now we are ready to generalize the symplectic group introduced in section \ref{subsec:complex-Abelian-varieties} for complex Abelian varieties to Abelian varieties over an arbitrary field.

\begin{defin} Let $A$ be an Abelian variety with symplectic data $(X,L)$. We say that an automorphism $f\in\Aut(A\times\widehat A)$ is symplectic if $(f\times f)^*L\simeq L$. We denote by $\Sp(A\times\widehat A)$ the group formed by the symplectic automorphisms.\end{defin}

As in Section \ref{subsec:complex-Abelian-varieties}, any endomorphism $f\in\End(A\times\widehat A)$ with matrix $f=\begin{pmatrix} \alpha & \beta\\ \gamma & \delta
\end{pmatrix},$  determines another endomorphism ${f}^\dagger\in\End(A\times\widehat A)$ whose matrix is ${f}^\dagger=\begin{pmatrix} \hat \delta & -\hat \beta\\ -\hat \gamma & \hat \alpha
\end{pmatrix}.$

\begin{prop} An automorphism $f\in\End(A\times\widehat A)$ is symplectic, $f\in \Sp(A\times\widehat A)$, if and only if $$f^\dagger\circ f=f\circ f^\dagger=\Id_{A\times \widehat A}.$$ \end{prop}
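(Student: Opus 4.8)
The plan is to translate the condition $(f\times f)^*L\simeq L$ into an equation among the matrix entries of $f$ by using the explicit description $L=p_{14}^*\mathcal P\otimes p_{32}^*\mathcal P^{-1}$ together with Remark~\ref{rem:isomorphi-correspondences}, which tells us that two correspondences on $X\times X$ are isomorphic if and only if they induce the same morphism $X\to\widehat X$. Since $\psi_L\colon A\times\widehat A\to\widehat A\times A$ is the isomorphism $\psi_L(a,\hat a)=(-\hat a,a)$ computed above, the pullback $(f\times f)^*L$ is the correspondence whose associated morphism is $\widehat f\circ\psi_L\circ f$ (this is the standard functoriality of the map $L\mapsto\psi_L$ under base change by $f\times f$, using that $\psi_{(g\times h)^*M}=\widehat h\circ\psi_M\circ g$). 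Hence $f$ is symplectic if and only if $\widehat f\circ\psi_L\circ f=\psi_L$, i.e. $\psi_L^{-1}\circ\widehat f\circ\psi_L\circ f=\Id_{A\times\widehat A}$.

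The second step is the purely bookkeeping identification $\psi_L^{-1}\circ\widehat f\circ\psi_L=f^\dagger$. Writing $f=\begin{pmatrix}\alpha&\beta\\\gamma&\delta\end{pmatrix}$ with the conventions of the excerpt, one computes $\widehat f\colon \widehat A\times A\to\widehat A\times A$ as the block matrix with entries the transposed morphisms of $f$, arranged appropriately once one fixes the identification $k_A\colon A\iso\widehat{\widehat A}$. Conjugating by the ``rotation'' $\psi_L=\begin{pmatrix}0&-\Id\\\Id&0\end{pmatrix}$ (in the block decomposition $A\times\widehat A$, reading off $\psi_L(a,\hat a)=(-\hat a,a)$) then produces exactly $\begin{pmatrix}\hat\delta&-\hat\beta\\-\hat\gamma&\hat\alpha\end{pmatrix}=f^\dagger$. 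With this identification the criterion $\widehat f\circ\psi_L\circ f=\psi_L$ becomes $f^\dagger\circ f=\Id_{A\times\widehat A}$; and since $f$ is assumed to be an automorphism, a left inverse is automatically a two-sided inverse, so $f\circ f^\dagger=\Id_{A\times\widehat A}$ follows as well. Conversely, if $f^\dagger\circ f=\Id$ then $\widehat f\circ\psi_L\circ f=\psi_L$, hence $(f\times f)^*L$ and $L$ induce the same morphism to $\widehat X$ and are therefore isomorphic by Remark~\ref{rem:isomorphi-correspondences}, so $f\in\Sp(A\times\widehat A)$.

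I expect the main obstacle to be the sign- and transposition-bookkeeping in the second step: one has to be careful about how $k_A$, the reversal of factors in $\widehat X=\widehat A\times A$ versus $X=A\times\widehat A$, and the dualization $M\mapsto\widehat{\psi_M}=-\psi_M$ for skew-symmetric $M$ interact, so that the conjugate $\psi_L^{-1}\circ\widehat f\circ\psi_L$ comes out with precisely the signs appearing in $f^\dagger$ rather than off by a sign or a transpose. This is routine but must be done consistently with the conventions fixed earlier in Section~1; it is essentially the same computation already invoked in the proof of the complex-analytic Proposition, now carried out at the level of morphisms of Abelian varieties rather than their analytic representations. Everything else — the functoriality $\psi_{(g\times h)^*M}=\widehat h\circ\psi_M\circ g$ and the passage from ``isomorphic correspondences'' to ``equal $\psi$'' — is immediate from Proposition~\ref{prop:biextension-correspondences} and Remark~\ref{rem:isomorphi-correspondences}.
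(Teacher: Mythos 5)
Your proposal is correct and follows essentially the same route as the paper's own proof: reduce via Remark~\ref{rem:isomorphi-correspondences} to the equality $\psi_{(f\times f)^*L}=\psi_L$, compute $\psi_{(f\times f)^*L}=\widehat f\circ\psi_L\circ f$, and identify $\psi_L^{-1}\circ\widehat f\circ\psi_L$ with $f^\dagger$. The only additions are your (correct) remark that a one-sided inverse of an automorphism is two-sided and your candid flagging of the sign bookkeeping, which the paper likewise leaves as a ``straightforward computation.''
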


\begin{proof} According to Remark \ref{rem:isomorphi-correspondences}, $f$ is symplectic if and only if $$\psi_{(f\times f)^*L}=\psi_L.$$ A straightforward computation shows that $\psi_{(f\times f)^*L}=\widehat f\circ\psi_L\circ f$. Therefore, $f$ is symplectic if and only if $$\psi_L^{-1}\circ \widehat f\circ\psi_L\circ f=\Id_{A\times\widehat A}.$$ One easily shows that $f^\dagger=\psi_L^{-1}\circ \widehat f\circ\psi_L$ and so the proof is finished.
\end{proof}

\begin{defin} Let $A$ be an Abelian variety. The group $\Sp(A\times\widehat A)$ is called the symplectic group associated to the Abelian variety $A$.
\end{defin}

\begin{rem} Due to the way in which this group was originally introduced, it is usually denoted $U(A\times\widehat A)$ and one says that it is the unitary group of the Abelian variety $A$ although strictly speaking what generalizes to an arbitrary Abelian variety is the symplectic construction.
\end{rem}

The above ideas can be extended to a pair of Abelian varieties. 

\begin{defin} Let us consider two Abelian varieties $A$, $B$ with symplectic data $(X_A,L_A)$, $(X_B,L_B)$.  We say that an isomorphism of Abelian varieties $f\colon A\times \widehat A\to B\times\widehat B$ is symplectic if $(f\times f)^*L_B\simeq L_A$. We denote by $\Sp(A\times\widehat A,B\times \widehat B)$ the set formed by the symplectic isomorphisms. 
\end{defin}

As in the case of a single Abelian variety, if we have a morphism $f\colon A\times \hat{A}\isom B\times\hat{B}$, we define a new morphism $f^\dag\colon B\times \hat{B}\isom A\times\hat{A}$. Their matrices are related as before and in a similar way as above one shows that $f$ is symplectic if and only if it satisfies $$f^\dagger\circ f=\Id_{A\times \widehat A},\quad f\circ f^\dagger=\Id_{B\times\widehat B}.$$

We can explain now the geometric origin of the involution $$\ddagger\colon \Sp(A\times\widehat A,B\times \widehat B)\to \Sp(A\times\widehat A,B\times \widehat B).$$ Given $f\in \Sp(A\times\widehat A,B\times \widehat B)$, by swapping every Abelian variety with its dual we can think of it as an element $f^s\in \Sp(\widehat A\times A,\widehat B\times B)$. A straightforward computation proves the following result.

\begin{lem}\label{lem:ddagger} For every $f\in \Sp(A\times\widehat A,B\times \widehat B)$ one has a commutative diagram $$\xymatrix{A\times\widehat A\ar[r]^{\psi_{L_A}}_\sim\ar[d]_{f^\ddagger} &  \widehat A\times A\ar[d]^{f^s}\\ B\times\widehat B & \widehat B\times B\ar[l]_{\psi^{-1}_{L_B}}^\sim }$$ That is $f^\ddagger=\psi^{-1}_{L_B}\circ f^s\circ\psi_{L_A}$.
\end{lem}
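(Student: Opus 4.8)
The plan is to prove Lemma~\ref{lem:ddagger} by writing every map appearing in the diagram in matrix form and carrying out the multiplication. Write $f=\begin{pmatrix}\alpha&\beta\\\gamma&\delta\end{pmatrix}$ with $\alpha\colon A\to B$, $\beta\colon\widehat A\to B$, $\gamma\colon A\to\widehat B$, $\delta\colon\widehat A\to\widehat B$, so that by definition $f^\ddagger=\begin{pmatrix}\alpha&-\beta\\-\gamma&\delta\end{pmatrix}\colon A\times\widehat A\to B\times\widehat B$. Since $f^s$ is obtained from $f$ merely by reordering the two product factors on both source and target, its matrix with respect to the orderings $(\widehat A,A)$ of $\widehat A\times A$ and $(\widehat B,B)$ of $\widehat B\times B$ is $\begin{pmatrix}\delta&\gamma\\\beta&\alpha\end{pmatrix}$.

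First I would record the explicit shape of the two symplectic identifications. By the computation carried out just before the definition of the natural symplectic data, $\psi_{L_A}\colon A\times\widehat A\to\widehat A\times A$ is $(a,\hat a)\mapsto(-\hat a,a)$, i.e. it has matrix $\begin{pmatrix}0&-\Id_{\widehat A}\\\Id_A&0\end{pmatrix}$; the same formula with $B$ in place of $A$ describes $\psi_{L_B}$, whence $\psi_{L_B}^{-1}\colon\widehat B\times B\to B\times\widehat B$ is $(\hat b,b)\mapsto(b,-\hat b)$, with matrix $\begin{pmatrix}0&\Id_B\\-\Id_{\widehat B}&0\end{pmatrix}$. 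I would note here that $\psi_{L_A}$ and $\psi_{L_B}$ already absorb the canonical identifications $k_A\colon A\isom\widehat{\widehat A}$, $k_B\colon B\isom\widehat{\widehat B}$, so that no further identification enters the composition and the three matrices genuinely multiply as written.

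Then the proof is just the composition $\psi_{L_B}^{-1}\circ f^s\circ\psi_{L_A}$. Chasing a point, $(a,\hat a)\xrightarrow{\psi_{L_A}}(-\hat a,a)\xrightarrow{f^s}\bigl(\gamma(a)-\delta(\hat a),\ \alpha(a)-\beta(\hat a)\bigr)\xrightarrow{\psi_{L_B}^{-1}}\bigl(\alpha(a)-\beta(\hat a),\ \delta(\hat a)-\gamma(a)\bigr)=f^\ddagger(a,\hat a)$; equivalently, one multiplies the three matrices above and checks that the product is $\begin{pmatrix}\alpha&-\beta\\-\gamma&\delta\end{pmatrix}=f^\ddagger$. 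This is exactly the identity $f^\ddagger=\psi_{L_B}^{-1}\circ f^s\circ\psi_{L_A}$ asserted by the lemma, and it makes transparent the geometric origin of $\ddagger$: conjugating the factor-swap $f\mapsto f^s$ by the ``quarter-turn'' isomorphisms $\psi_{L_A}$ and $\psi_{L_B}$ is precisely what produces the sign change on the off-diagonal blocks. I do not expect any genuine obstacle here; the only point requiring care is consistent bookkeeping of the two orderings of each product and of the signs in $\psi_{L_A}$ and $\psi_{L_B}^{-1}$, which is settled once and for all by fixing the conventions as above.
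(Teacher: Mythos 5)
Your proof is correct and is exactly the ``straightforward computation'' that the paper asserts but does not write out: the point-chase $(a,\hat a)\mapsto(-\hat a,a)\mapsto(\gamma(a)-\delta(\hat a),\,\alpha(a)-\beta(\hat a))\mapsto(\alpha(a)-\beta(\hat a),\,\delta(\hat a)-\gamma(a))$ reproduces $f^\ddagger$, and your matrices for $\psi_{L_A}$, $\psi_{L_B}^{-1}$ and $f^s$ agree with the paper's conventions (in particular with $\psi_L(a,\hat a)=(-\hat a,a)$ established before the definition of the natural symplectic data). No issues.
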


\section{Derived equivalences of Abelian varieties}

Let  us consider a pair of Abelian varieties $A$ and $B$. Due to Orlov's Representability Theorem \cite{Or97}, for any exact equivalence of derived categories (we will simply say a derived equivalence) $$\Phi\colon  \cdbc{A}\isom \cdbc{B}$$ there exists a unique object (up to isomorphism) $\cplx{K}\in \cdbc{A \times B}$ such that $\Phi$ is isomorphic to the integral functor  $\Phi^\cplx{K}_{A\to B}\colon \cdbc{A}\isom \cdbc{B}$  defined by the kernel $\cplx{K}$ as $$\Phi_{A\to B}^{\cplx K}(\cplx{E})=\bR \pi_{2_\ast}(\bL \pi_1^\ast\cplx{E}\lotimes\cplx{K})\, ,$$ where $\pi_1\colon A\times B\to A$ and $\pi_2\colon A\times B\to B$ are the natural projections. Let us recall that an integral functor $\Phi^\cplx{K}_{A\to B}\colon \cdbc{A}\isom \cdbc{B}$ is called a Fourier-Mukai functor if it is a derived  equivalence. If in addition the kernel is a concentrated complex, that is a sheaf up to a shift, then the functor is called a Fourier-Mukai transform. We can rephrase Orlov's representability theorem by saying that all derived equivalences between Abelian varieties are Fourier-Mukai functors.
Finally, we recall that one says that two Abelian varieties $A$ and $B$ are Fourier-Mukai partners if there exists a Fourier-Mukai functor $\Phi^\cplx{K}_{A\to B}\colon \cdbc{A}\isom \cdbc{B}$ or equivalently if they are derived equivalent.

The first connection between the symplectic data of Abelian varieties over an algebraically closed field and their equivalences was obtained by Polishchuk \cite{Pol96}:

\begin{thm}\label{thm:polishchuk} Let $A$ and $B$ be Abelian varieties over an algebraically closed field. If there exists a symplectic isomorphism $f\colon A\times\widehat A\xrightarrow{\sim} B\times\widehat B$, that is $f\in\Sp(A\times\widehat A,B\times\widehat B)$, then $A$ and $B$ are Fourier-Mukai partners.
\end{thm}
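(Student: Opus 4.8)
The plan is to convert the symplectic isomorphism $f\colon A\times\widehat A\isom B\times\widehat B$ into a Lagrangian abelian subvariety attached to $A\times B$, and then to extract a Fourier--Mukai kernel from that Lagrangian. Writing $X_A=A\times\widehat A$, $X_B=B\times\widehat B$ with their symplectic biextensions $L_A$, $L_B$, I would first put on $X_A\times X_B$ the ``difference'' biextension obtained by pulling $L_A$ back along the two $X_A$-factors and $L_B^{-1}$ along the two $X_B$-factors; by Proposition~\ref{prop:biextension-correspondences} and additivity of the $\psi$-construction this is again symplectic, and by Remark~\ref{rem:isomorphi-correspondences} the graph $\Gamma_f=\{(x,f(x))\}$ is isotropic precisely because $(f\times f)^*L_B\simeq L_A$. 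Since $\dim\Gamma_f=\tfrac12\dim(X_A\times X_B)$ and the $\psi$-morphism of the difference biextension carries $\Gamma_f$ onto its own annihilator, $\Gamma_f$ is Lagrangian. Permuting the two middle factors identifies $X_A\times X_B$ with $(A\times B)\times\widehat{A\times B}$, under which the difference biextension becomes the natural symplectic data of $C:=A\times B$ up to the sign twist on the $B$-part underlying the involution $\ddagger$ (Lemma~\ref{lem:ddagger}); absorbing that twist, i.e. passing from $f$ to $f^\ddagger$, I obtain a Lagrangian abelian subvariety $\Lambda\subset C\times\widehat C$.

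Next I would build the kernel from $\Lambda$. Both projections $p\colon\Lambda\to C$ and $q\colon\Lambda\to\widehat C$ are isogenies onto abelian subvarieties, with $p$ surjective precisely when the block $\beta$ of $f$ is an isogeny, the case singled out by Orlov. The classical correspondence between Lagrangian abelian subvarieties of $C\times\widehat C$ and simple semihomogeneous sheaves on $C$ --- Mukai's theory of semihomogeneous bundles, recast by Polishchuk --- then produces, after choosing on $\Lambda$ a line bundle of the type dictated by the restriction to $\Lambda$ of the Poincar\'e biextension of $C\times\widehat C$, a simple semihomogeneous sheaf $\cK$ on $C=A\times B$ supported on a translate of $p(\Lambda)$. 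The only use of the algebraic closedness of $\Bbbk$ is here: producing the line bundle on $\Lambda$ amounts to splitting a finite theta group, a central extension of a finite commutative group scheme by $\mathbb G_m$, which is always possible over an algebraically closed field.

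It then remains to verify that $\Phi^{\cK}_{A\to B}\colon\cdbc A\to\cdbc B$ is an equivalence. I would run the construction also on $f^{-1}$ (equivalently on $f^\dagger$) to obtain a kernel $\cK'$ on $B\times A$, and show, using base change and the projection formula on abelian varieties, that the composed kernels $\cK'\rast\cK$ and $\cK\rast\cK'$ are the structure sheaves of the diagonals of $A\times A$ and $B\times B$ up to a shift. After pushing everything forward along the isogenies $p$ and $q$ this should reduce to Mukai's identity that the square of the Fourier transform with Poincar\'e kernel is inversion composed with a shift; the fact that $\Lambda$ is the graph of an \emph{isomorphism}, not merely of an isotropic correspondence, is what forces all the intermediate supports to collapse onto the diagonals.

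The hard part will be the construction of $\cK$ and the convolution computation in the non-generic case, i.e. when $\beta$ is not an isogeny and $\Lambda$ is not transverse to the vertical Lagrangian $\{0\}\times\widehat C$: there $\cK$ is no longer a line bundle pulled back from a Poincar\'e bundle but a genuinely higher-rank semihomogeneous sheaf supported on a proper abelian subvariety of $A\times B$, and showing that the convolutions still collapse to the diagonals needs the full apparatus of theta groups and semihomogeneous bundles rather than Poincar\'e-bundle pullbacks alone. This is precisely the point that Orlov's later refinement leaves unaddressed and that the present paper resolves for simple $A$.
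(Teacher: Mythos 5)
First, a point of orientation: the paper does not prove Theorem \ref{thm:polishchuk} itself; it quotes it from Polishchuk, whose argument runs through representations of the Heisenberg groupoid attached to the symplectic data, and the paper explicitly remarks that that argument gives no control over the kernels involved. Your proposal instead follows the kernel-construction route --- the graph of $f$ as a Lagrangian $\Gamma_f\subset (A\times B)\times\widehat{A\times B}$, then a simple semihomogeneous sheaf $\cK$ on $A\times B$ attached to it --- which is essentially Orlov's \cite[Construction 4.10]{Or02} repackaged. Your translation of the symplectic condition on $f$ into the Lagrangian condition on $\Gamma_f$, and your identification of the surjectivity of $p\colon\Gamma_f\to A\times B$ with $\beta$ being an isogeny, are both correct.

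The difficulty is that the step you defer to your final paragraph is not a technical afterthought: it is precisely the gap that this paper exists to (partially) close. When $\beta$ is not an isogeny, neither the construction of the semihomogeneous sheaf $\cK$ supported on a proper coset nor the verification that the convolutions $\cK'\rast\cK$ and $\cK\rast\cK'$ collapse to the diagonals is available; no argument is given that the resulting integral functor is fully faithful, let alone an equivalence. One might hope to rescue the weaker existence statement (some equivalence, not necessarily one inducing $f$) by precomposing $f$ with an element of $\Sp(A\times\widehat A)$ so as to force the new $\beta$ to be an isogeny; but the existence of such a factorization $f=f_2\circ f_1$ is exactly Orlov's ``easy to see'' claim that the authors state they could not verify, so this escape route is not available either. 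As written, your argument proves the theorem only when $\beta$ is an isogeny, or --- invoking Theorem \ref{thm:orlov-corrected} --- when $\End^0(A)$ is a division algebra and the field has characteristic zero; Polishchuk's Heisenberg-groupoid argument, by contrast, covers all Abelian varieties over an algebraically closed field of arbitrary characteristic. The gap you acknowledge is real and is not filled by the proposal.
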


The proof of this result given by Polishchuk is based on a theory of representations of the Heisenberg groupoid, which is a stack of monoidal groupoids naturally associated to the symplectic data of the Abelian varieties. The procedure mimics the classical theory of representations of the Heisenberg group. However the proof does not give precise information neither on the nature of the kernels of the Fourier-Mukai functors nor on the relationship between symplectic isomorphisms and the corresponding Fourier-Mukai functors.

On the other hand, Polishchuk also conjectured in \cite{Pol96} that for Abelian varieties over an algebraically closed field the converse statement of Theorem \ref{thm:polishchuk} is also true.

This question was studied immediately after by Orlov. He proved \cite[Theorem 2.10, Proposition 2.18]{Or02}:

\begin{thm}\label{prop:orlov} Let $A$, $B$ be Abelian varieties over an arbitrary field. There is a natural correspondence $$\gamma_{A,B}\colon\Eq(\cdbc{A},\cdbc{B})\to \Sp(A\times\widehat A,B\times\widehat B)$$ that associates to any Fourier-Mukai functor $\Phi^\cplx{K}_{A\to B}\colon \cdbc{A}\isom \cdbc{B}$ a symplectic isomorphism $f_\cplx{K}\in\Sp(A\times\widehat A,B\times\widehat B)$.\end{thm}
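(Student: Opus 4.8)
The plan is to construct $\gamma_{A,B}$ explicitly, to check that it is compatible with composition of Fourier--Mukai functors (which is what \emph{natural} means here), and then to verify that every isomorphism it produces is symplectic.

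First I would construct $f_\cplx{K}$. Given a Fourier--Mukai functor $\Phi^\cplx{K}_{A\to B}\colon\cdbc{A}\isom\cdbc{B}$, conjugation by it is a group isomorphism $\Auteq(\cdbc{A})\xrightarrow{\sim}\Auteq(\cdbc{B})$. Inside $\Auteq(\cdbc{A})$ one has the subgroup $G_A$ generated by the translations $t_a^*$ ($a\in A$) and the twists $-\otimes P_\xi$ by degree-zero line bundles ($\xi\in\widehat A$); since $t_a^*P_\xi\simeq P_\xi$, the group $G_A$ is abelian, and it is the group of $\Bbbk$-points of the abelian variety $A\times\widehat A$, realised as the identity component of the (representable) functor of autoequivalences. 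A rigidity argument in the style of Orlov and Rouquier would then show that $\Phi^\cplx{K}_{A\to B}$ carries $G_A$ onto the corresponding subgroup $G_B\subset\Auteq(\cdbc{B})$ and that the induced isomorphism comes from an isomorphism of abelian varieties $f_\cplx{K}\colon A\times\widehat A\xrightarrow{\sim}B\times\widehat B$; one then sets $\gamma_{A,B}(\Phi^\cplx{K}_{A\to B})=f_\cplx{K}$. From the conjugation description, compatibility with composition and $\gamma_{A,A}(\Id)=\Id$ are immediate, so in particular the quasi-inverse $(\Phi^\cplx{K}_{A\to B})^{-1}=\Phi^\cplx{M}_{B\to A}$, with $\cplx{M}=\sigma^*\dcplx{K}[\dim B]$ (no canonical twist appears, as $\omega_B\simeq\cO_B$), induces $f_\cplx{K}^{-1}$. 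I would also record the more hands-on description of $f_\cplx{K}$, adapted to an arbitrary base field, which uses the structure of the kernel recalled earlier in this section: writing $\cplx{K}$ as a shift of a semihomogeneous sheaf $\cK$ on $A\times B$, the homomorphism $\varphi_\cK$ determined up to isogeny by $t_z^*\cK\simeq\cK\otimes P_{\varphi_\cK(z)}$ encodes exactly how $\Phi^\cplx{K}_{A\to B}$ exchanges translations and twists, and $f_\cplx{K}$ is read off from it.

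Next I would check the symplectic condition. By the characterisation of symplectic isomorphisms obtained above, it suffices to prove $f_\cplx{K}^\dagger\circ f_\cplx{K}=\Id$ and $f_\cplx{K}\circ f_\cplx{K}^\dagger=\Id$; since $f_\cplx{K}$ is already invertible, this reduces to the single identity $f_\cplx{K}^{-1}=f_\cplx{K}^\dagger$, i.e. $f_\cplx{M}=f_\cplx{K}^\dagger$. Recall that for $f\colon A\times\widehat A\isom B\times\widehat B$ one has $f^\dagger=\psi_{L_A}^{-1}\circ\widehat f\circ\psi_{L_B}$, so $\dagger$ is conjugation by the $\psi_L$'s together with passage to the dual morphism. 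The key observation is that this operation is realised on kernels by Mukai's Fourier transforms: with $\cS_A=\Phi^{\mathcal P}_{A\to\widehat A}\colon\cdbc{A}\isom\cdbc{\widehat A}$ and the analogous $\cS_B$, a direct computation with the Poincar\'e bundle shows that $\cS_A$ intertwines $t_a^*$ with $-\otimes P_{-a}$ and $-\otimes P_\xi$ with $t_{-\xi}^*$, hence (after the identification $k_A$) $\gamma_{A,\widehat A}(\cS_A)=\psi_{L_A}$, and likewise for $B$; furthermore Mukai's inversion theorem $\cS_{\widehat A}\circ\cS_A\simeq(-1_A)^*[-\dim A]$ matches, under $\gamma$, the involutivity up to sign of $\psi_{L_A}$ noted for $\psi_L$. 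Since passing from $\cplx{K}$ to the quasi-inverse kernel $\cplx{M}$ is, up to shifts, intertwined through $\cS_A$ and $\cS_B$ with passing from $f_\cplx{K}$ to its dual, feeding these facts into the functoriality of $\gamma$ would give $f_\cplx{M}=\psi_{L_A}^{-1}\circ\widehat{f_\cplx{K}}\circ\psi_{L_B}=f_\cplx{K}^\dagger$, so $f_\cplx{K}\in\Sp(A\times\widehat A,B\times\widehat B)$.

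The hard part will be the rigidity step: proving that conjugation by $\Phi^\cplx{K}_{A\to B}$ genuinely preserves the translation-and-twist subgroup and induces a morphism of \emph{abelian varieties} $A\times\widehat A\to B\times\widehat B$, not merely a bijection of $\Bbbk$-points. Over an algebraically closed field this goes through the action on point objects as in Orlov's and Rouquier's treatments; over an arbitrary field one must instead argue at the level of the kernel, which is precisely why its description as a shifted semihomogeneous sheaf is set up beforehand. The remaining work is bookkeeping: tracking the shifts and the signs $(-1)^*$ produced by Mukai's inversion theorem so that the categorical manipulations deliver the involution $\dagger$, rather than $\ddagger$, on the symplectic side.
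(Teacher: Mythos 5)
The paper offers no proof of this theorem: it is imported verbatim from Orlov \cite[Theorem 2.10, Proposition 2.18]{Or02}, so there is no in-text argument to compare yours against. Measured against Orlov's actual proof, your outline has the right two-step shape: first extract $f_{\cplx{K}}$ from the way $\Phi^{\cplx{K}}_{A\to B}$ exchanges translations and degree-zero twists, then verify $f_{\cplx{K}}^{\dagger}=f_{\cplx{K}}^{-1}$ by comparing the quasi-inverse kernel with the dual morphism via the Mukai transforms $\Phi^{\mathcal P_A}_{A\to\widehat A}$, $\Phi^{\mathcal P_B}_{B\to\widehat B}$. Your ``hands-on description'' is in fact Orlov's definition: he considers the locus of $(a,\alpha,b,\beta)\in(A\times\widehat A)\times(B\times\widehat B)$ for which translating the kernel by $(a,b)$ and twisting by the corresponding degree-zero line bundles returns the kernel, and proves this subgroup scheme is the graph of an isomorphism of Abelian varieties; this is exactly where the semihomogeneity and flatness of $\mathcal K$ (Propositions \ref{VarAb} and \ref{flatness}) enter.

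As submitted, however, this is a plan rather than a proof, and the step you yourself flag as ``the hard part'' is the entire mathematical content of the theorem. Two concrete warnings. First, your primary route --- realising $A\times\widehat A$ as ``the identity component of the (representable) functor of autoequivalences'' --- is much heavier than what is needed and is not available off the shelf over an arbitrary field; representability of the autoequivalence group is a later theorem of Rouquier whose proof itself uses constructions of this kind, so leaning on it risks circularity. The workable route is the second one you mention: define the subgroup scheme inside $(A\times\widehat A)\times(B\times\widehat B)$ directly from the kernel and prove (i) that it is closed and of dimension $2\dim A$, using that $\mathcal K$ is a semihomogeneous sheaf flat over both factors, and (ii) that both projections are isomorphisms, using that $\Phi^{\cplx{K}}_{A\to B}$ is an equivalence; none of this is carried out. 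Second, in the symplectic verification the identity $f_{\cplx{M}}=\psi_{L_A}^{-1}\circ\widehat{f_{\cplx{K}}}\circ\psi_{L_B}$ is asserted, not derived: the minus signs distinguishing $\dagger$ from $\ddagger$ come from the fact that dualising the kernel inverts the twisting character, and that computation (the content of Orlov's Proposition 2.18) is precisely the ``bookkeeping'' you defer. Until those two items are written out, the map $\gamma_{A,B}$ has not been shown to exist, let alone to land in $\Sp(A\times\widehat A,B\times\widehat B)$.
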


As a consequence he got \cite[Theorem 2.19]{Or02} the following stronger version of Polishchuk's conjecture:

\begin{thm}\label{thm:orlov} Let $A$, $B$ be Abelian varieties over an arbitrary field $\Bbbk$. If $A$ and $B$ are Fourier-Mukai partners then there exists a symplectic isomorphism $f\colon A\times\widehat A\xrightarrow{\sim} B\times\widehat B$.
\end{thm}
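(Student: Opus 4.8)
The plan is to deduce the statement directly from Theorem~\ref{prop:orlov}, which supplies the correspondence $\gamma_{A,B}\colon \Eq(\cdbc{A},\cdbc{B})\to \Sp(A\times\widehat A,B\times\widehat B)$. Saying that $A$ and $B$ are Fourier-Mukai partners means precisely that there exists a Fourier-Mukai functor $\Phi^{\cplx K}_{A\to B}\colon \cdbc{A}\isom \cdbc{B}$; by definition this is an exact equivalence, so $\Eq(\cdbc{A},\cdbc{B})$ is non-empty and contains $\Phi^{\cplx K}_{A\to B}$. Applying $\gamma_{A,B}$ to this element produces $f_{\cplx K}=\gamma_{A,B}(\Phi^{\cplx K}_{A\to B})\in \Sp(A\times\widehat A,B\times\widehat B)$, which is by construction an isomorphism of the Abelian varieties $A\times\widehat A$ and $B\times\widehat B$. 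This $f_{\cplx K}$ is the sought-after symplectic isomorphism $f$, so the proof is complete once Theorem~\ref{prop:orlov} is in hand.

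Since the whole content is thus concentrated in Theorem~\ref{prop:orlov}, it is worth recalling how that correspondence is built, as that is where the actual work lies. Given the Fourier-Mukai kernel $\cplx K\in \cdbc{A\times B}$, one attaches to $\Phi^{\cplx K}_{A\to B}$ an induced isomorphism $f_{\cplx K}\colon A\times\widehat A\to B\times\widehat B$ of Abelian varieties, concretely by recording how $\Phi^{\cplx K}_{A\to B}$ (together with its quasi-inverse) transforms the skyscraper sheaves at the closed points of $A$ and the line bundles parametrised by points of $\widehat A$, or equivalently via the cohomological Fourier-Mukai transform associated to $\cplx K$. One then checks the identities $f_{\cplx K}^\dagger\circ f_{\cplx K}=\Id_{A\times\widehat A}$ and $f_{\cplx K}\circ f_{\cplx K}^\dagger=\Id_{B\times\widehat B}$; by the characterisation of symplectic isomorphisms obtained in Section~1 (the version for a pair of Abelian varieties of the Proposition characterising $\Sp(A\times\widehat A)$), this is exactly the assertion that $f_{\cplx K}$ is symplectic.

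Consequently there is no genuine obstacle in the proof of Theorem~\ref{thm:orlov}: it is a purely formal consequence of the mere existence of $\gamma_{A,B}$. The delicate point, which is the real subject of this paper, is the \emph{converse} direction, namely the surjectivity of $\gamma_{A,B}$ — realising a prescribed symplectic isomorphism as the one attached to some Fourier-Mukai kernel. That is where the structure theory of symplectic isomorphisms between Abelian varieties and the description of the admissible kernels as semihomogeneous sheaves (up to shift) become essential, and it is taken up in Section~2.
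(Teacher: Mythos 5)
Your proposal is correct and matches the paper exactly: the paper presents Theorem~\ref{thm:orlov} as an immediate consequence of Theorem~\ref{prop:orlov} (citing Orlov for both), since a Fourier--Mukai partner relationship supplies an element of $\Eq(\dbc{A},\dbc{B})$ to which $\gamma_{A,B}$ can be applied. Your additional remarks on how $\gamma_{A,B}$ is constructed and on the surjectivity question being the real content of the paper are also accurate.
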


Thus, Theorem \ref{thm:polishchuk} together with Theorem \ref{thm:orlov} give the following:

\begin{thm}\label{thm:polishchuk-orlov} Two Abelian varieties $A$, $B$  over an algebraically closed field are Fourier-Mukai partners if and only if there exists a symplectic isomorphism $f\colon A\times\widehat A\xrightarrow{\sim} B\times\widehat B$, that is $f\in\Sp(A\times\widehat A,B\times\widehat B)$.
\end{thm}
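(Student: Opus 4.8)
The plan is simply to assemble the two implications from results already at hand. For the backward direction, assume we are given a symplectic isomorphism $f\in\Sp(A\times\widehat A,B\times\widehat B)$. Then Theorem \ref{thm:polishchuk} applies directly and produces a Fourier-Mukai functor $\Phi\colon\cdbc{A}\isom\cdbc{B}$, so that $A$ and $B$ are Fourier-Mukai partners. This is precisely where the hypothesis that the base field be algebraically closed enters, since Polishchuk's construction of partners out of symplectic data runs through the theory of representations of the Heisenberg groupoid.

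For the forward direction, assume that $A$ and $B$ are Fourier-Mukai partners, so that there exists a Fourier-Mukai functor $\Phi^\cplx{K}_{A\to B}\colon\cdbc{A}\isom\cdbc{B}$. Applying Theorem \ref{thm:orlov}, which in fact holds over an arbitrary field, yields a symplectic isomorphism $f\colon A\times\widehat A\isom B\times\widehat B$; concretely one may take $f=f_\cplx{K}=\gamma_{A,B}(\Phi^\cplx{K}_{A\to B})$, the image of the functor under the natural correspondence of Theorem \ref{prop:orlov}. Combining the two directions gives the equivalence asserted in Theorem \ref{thm:polishchuk-orlov}.

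There is no genuine obstacle in this last argument: all of the substance is already packaged inside Theorems \ref{thm:polishchuk} and \ref{thm:orlov}, the hard inputs being Polishchuk's Heisenberg-groupoid argument for the backward implication and Orlov's explicit construction of $\gamma_{A,B}$ for the forward one. What is \emph{not} needed here, and what the rest of the paper is devoted to, is the much more delicate question of whether $\gamma_{A,B}$ is surjective; that refinement is the content of Theorem \ref{thm:orlov-corrected}.
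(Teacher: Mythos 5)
Your proposal is correct and coincides with the paper's own argument: the theorem is stated there precisely as the combination of Theorem \ref{thm:polishchuk} (Polishchuk's implication from symplectic isomorphisms to Fourier-Mukai partners, where algebraic closedness is used) and Theorem \ref{thm:orlov} (Orlov's converse via $\gamma_{A,B}$). Your closing remark correctly identifies that surjectivity of $\gamma_{A,B}$ is not needed here and is a separate, harder question.
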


Moreover, Orlov \cite[Corollary 2.16]{Or02} also proved:

\begin{prop} Let $A$ be an Abelian variety over an arbitrary field $\Bbbk$. The map 
$$\begin{aligned} \gamma_A\colon \Aut(\cdbc{A}) &\to \Sp(A\times \widehat{A})\\
\Phi^{\cplx{K}}&\mapsto f_{\cplx{K}} \, 
\end{aligned}$$ is a group morphism.\qed
\end{prop}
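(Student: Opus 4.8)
The plan is to read off the homomorphism property directly from the way Orlov's correspondence is constructed, namely as a conjugation action. First I would recall that construction in a convenient form. For $x=(a,\xi)\in A\times\widehat A$ let $\tau^A_x\in\Aut(\cdbc A)$ denote the \emph{elementary} autoequivalence $\cplx E\mapsto t_{a\,*}(\cplx E\otimes P_\xi)$, translation by $a$ composed with the twist by the degree-zero line bundle $P_\xi$. Since $t_a^*P_\xi\cong P_\xi$, translations and $\Pic^0$-twists commute, so $x\mapsto\tau^A_x$ is a group isomorphism of $A\times\widehat A$ onto a subgroup of $\Aut(\cdbc A)$, which together with the (central) shift functor generates the subgroup $\bbZ\oplus(A\times\widehat A)$ appearing in the introduction. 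The content of \cite{Or02} (see Theorem \ref{prop:orlov}) is that for every equivalence $\Phi\colon\cdbc A\isom\cdbc B$ and every $x$, the autoequivalence $\Phi\circ\tau^A_x\circ\Phi^{-1}$ of $\cdbc B$ is again elementary up to a shift, say $\tau^B_{f_{\cplx K}(x)}[\,n_\Phi(x)\,]$ with $f_{\cplx K}(x)\in B\times\widehat B$ and $n_\Phi(x)\in\bbZ$ uniquely determined (the shift is fixed by the amplitude of the functor, the point by $\tau^B$ being faithfully parametrised), that $x\mapsto f_{\cplx K}(x)$ is an isomorphism, and that it is symplectic; this is the correspondence $\gamma_{A,B}$, and I would take these facts as input. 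The shifts $n_\Phi(x)$ will play no role below, so I would simply carry them along.

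Granting this, the homomorphism property reduces to a short formal computation, which I would carry out as follows. Fix $\Phi,\Psi\in\Aut(\cdbc A)$ and $x\in A\times\widehat A$, and set $x'=\gamma_A(\Psi)(x)$, so that $\Psi\circ\tau^A_x\circ\Psi^{-1}=\tau^A_{x'}[\,n_\Psi(x)\,]$. Conjugating by $\Phi$ and using that the shift functor is central in $\Aut(\cdbc A)$ gives
$$(\Phi\circ\Psi)\circ\tau^A_x\circ(\Phi\circ\Psi)^{-1}=\bigl(\Phi\circ\tau^A_{x'}\circ\Phi^{-1}\bigr)[\,n_\Psi(x)\,]=\tau^A_{\gamma_A(\Phi)(x')}\bigl[\,n_\Phi(x')+n_\Psi(x)\,\bigr].$$
By uniqueness of the decomposition this forces $\gamma_A(\Phi\circ\Psi)(x)=\gamma_A(\Phi)(x')=\gamma_A(\Phi)\bigl(\gamma_A(\Psi)(x)\bigr)$, i.e.\ $\gamma_A(\Phi\circ\Psi)=\gamma_A(\Phi)\circ\gamma_A(\Psi)$; moreover $\gamma_A(\Id)=\Id$ because $\Id$ fixes each $\tau^A_x$, and $\gamma_A(\Phi)\circ\gamma_A(\Psi)$ lies in $\Sp(A\times\widehat A)$ since the defining condition $(f\times f)^*L\simeq L$ is stable under composition. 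Equivalently, I would phrase the same argument structurally: $\bbZ\oplus(A\times\widehat A)$ is normal in $\Aut(\cdbc A)$ — shifts are central and $\Phi$-conjugation preserves the elementary autoequivalences — and $\gamma_A$ is by construction the projection onto the quotient, identified with $\Sp(A\times\widehat A)$, hence a homomorphism.

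The step I expect to be the genuine obstacle, though it is already in \cite{Or02} and can simply be quoted, is the assertion that $\Phi\circ\tau^A_x\circ\Phi^{-1}$ is again elementary up to a shift — equivalently that $\bbZ\oplus(A\times\widehat A)$ is a normal subgroup of $\Aut(\cdbc A)$, and that the resulting $f_{\cplx K}$ is symplectic. This is exactly what rests on the structure of Fourier-Mukai kernels between Abelian varieties (that they are semihomogeneous sheaves up to a shift, as established in Section~2) and is what makes Theorem \ref{prop:orlov} work; once it is granted, the present proposition adds nothing beyond the formal computation above. I would therefore present the proof in precisely this way, citing \cite{Or02} for the conjugation description of $\gamma_{A,B}$ and for the normality and symplecticity inputs.
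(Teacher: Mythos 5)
Your argument is correct, and it is essentially the same approach as the source: the paper itself offers no proof of this proposition (it is quoted from \cite{Or02}), and the conjugation description $\Phi\circ\tau^A_x\circ\Phi^{-1}\cong\tau^A_{f_{\cplx{K}}(x)}$ that you take as input is precisely how Orlov defines $f_{\cplx{K}}$, so the homomorphism property does reduce to the uniqueness computation you give. The only cosmetic point is that in Orlov's construction the conjugate of an elementary autoequivalence is again elementary with no shift, so carrying the integers $n_\Phi(x)$ along is harmless but unnecessary.
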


\subsection{Kernels of Fourier-Mukai functors and semihomogeneous sheaves}

As we mentioned above, Polishchuk's proof of Theorem \ref{thm:polishchuk} does not give information on the structure of the kernels that give Fourier-Mukai functors. The following result is due to Orlov \cite[Proposition 3.2]{Or02}.

\begin{prop} \label{VarAb} Let $A, B$ be Abelian varieties over an arbitrary field, and let $\cplx{K}$ be an object of
$\cdbc{A \times B}$ such that the integral functor $\Phi^\cplx{K}\colon \cdbc{A}\isom \cdbc{B}$ is a Fourier-Mukai functor.
Then $\cplx{K}$ has only one non-trivial cohomology sheaf, that is, $\cplx{K}\simeq \mathcal{K}[n]$  where $\mathcal{K}$ is a sheaf on $A \times B$   and $n\in \mathbb{Z}$. \end{prop}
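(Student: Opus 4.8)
The plan is to reduce to a fibrewise statement. After extending the base field we may assume that $\Bbbk$ is algebraically closed; it then suffices to show that the (unique) kernel $\cplx K$ has cohomology in a single degree. Write $\Phi:=\Phi^{\cplx K}_{A\to B}$, and for a closed point $a\in A$ let $\iota_a\colon\{a\}\times B\hookrightarrow A\times B$ be the inclusion of the fibre and $k(a)$ the residue sheaf of $a$ on $A$. Flat base change along $\pi_1\colon A\times B\to A$ together with the projection formula gives a natural identification $\Phi(k(a))\simeq L\iota_a^{*}\cplx K$ in $\cdbc B$ (using $\{a\}\times B\simeq B$). It is therefore enough to prove that $L\iota_a^{*}\cplx K$ is concentrated in a single degree $m$, the same for all closed points $a$: the acyclicity lemma, applied to a local presentation of the perfect complex $\cplx K$ by vector bundles, then shows (as $A$ is connected) that $\cplx K$ is itself concentrated in degree $m$ and flat over $A$, that is $\cplx K\simeq\mathcal K[-m]$ with $\mathcal K$ a coherent sheaf on $A\times B$, which is the assertion.

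The core of the matter is to show that $\mathcal G^{\bullet}_a:=\Phi(k(a))$ is a shift of a coherent sheaf. Since $\Phi$ is an equivalence, $\mathcal G^{\bullet}_a$ is indecomposable, $\End(\mathcal G^{\bullet}_a)$ is a field, and $\Hom(\mathcal G^{\bullet}_a,\mathcal G^{\bullet}_a[i])=0$ for $i<0$. On the other hand $k(a)\otimes\mathcal P_{\xi}\simeq k(a)$ for every $\xi\in\widehat A$, where $\mathcal P_{\xi}$ denotes the degree-zero line bundle on $A$ attached to $\xi$. Recall that the symplectic isomorphism $f:=f_{\cplx K}$ is built in \cite{Or02} (Theorem \ref{prop:orlov}) precisely so that $\Phi$ intertwines, up to an overall shift, the action of $A\times\widehat A$ on $\cdbc A$ by the autoequivalences $\cplx E\mapsto t_{a_0}^{*}\cplx E\otimes\mathcal P_{\xi}$, $(a_0,\xi)\in A\times\widehat A$, with the corresponding action of $B\times\widehat B$ on $\cdbc B$. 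Restricting this to the elements $(0,\xi)$ and observing that, both sides being group homomorphisms in $\xi$, the resulting shifts define a homomorphism $\widehat A\to\bbZ$, necessarily zero since $\widehat A$ is divisible, we obtain $$\mathcal G^{\bullet}_a\;\simeq\;t_{b}^{*}\mathcal G^{\bullet}_a\otimes\mathcal P_{\eta}\qquad\text{for all }(b,\eta)\in\Gamma:=f(\{0\}\times\widehat A)\subset B\times\widehat B\,,$$ and $\Gamma$ is a $g$-dimensional abelian subvariety ($g=\dim A$), being the image under the symplectic isomorphism $f$ of the Lagrangian $\{0\}\times\widehat A$.

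Each cohomology sheaf $\mathcal H^{i}(\mathcal G^{\bullet}_a)$ is then invariant under translation-and-twist by all of $\Gamma$. Together with the indecomposability of $\mathcal G^{\bullet}_a$ and the vanishing of its negative self-extensions, the rigidity of such $\Gamma$-invariant objects --- essentially the theory of semihomogeneous sheaves on Abelian varieties --- forces all but one of the $\mathcal H^{i}(\mathcal G^{\bullet}_a)$ to vanish, so that $\mathcal G^{\bullet}_a$ is a shift of a single (semihomogeneous) coherent sheaf. That the shift is independent of $a$ follows from the transitivity of the translation action on the sheaves $k(a)$: the autoequivalence $\Phi\circ t_{c}^{*}\circ\Phi^{-1}$ of $\cdbc B$ is, by the same intertwining applied to $(c,0)$ and the same divisibility argument, a genuine translation-and-twist functor, and such functors preserve $\operatorname{Coh}(B)$; hence the degree in which $\Phi(k(a-c))=(\Phi\circ t_{c}^{*}\circ\Phi^{-1})(\mathcal G^{\bullet}_a)$ is concentrated equals that of $\mathcal G^{\bullet}_a$, and it is the same $m$ for all $a$. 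This completes the plan.

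I expect the main obstacle to be the passage, in the third paragraph, from $\Gamma$-invariance to concentration in a single degree. This cannot follow from formal properties of an equivalence alone --- for K3 surfaces, for instance, the kernel of a Fourier-Mukai autoequivalence such as the square of a spherical twist need not be concentrated --- so one genuinely has to exploit the group structure of $A$, via the translation-and-twist invariance supplied by Orlov's correspondence, together with the rigidity of semihomogeneous sheaves. A subsidiary technical point is to make the intertwining property of $f_{\cplx K}$ precise, in particular to keep track of the shift functors that appear in it.
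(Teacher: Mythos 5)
The paper offers no proof of this proposition: it is quoted from Orlov \cite[Proposition 3.2]{Or02}, so your sketch has to be measured against Orlov's published argument rather than anything in the text above. Your architecture in fact mirrors his: reduce to the derived fibres $L\iota_a^{*}\cplx{K}\simeq\Phi(k(a))$ via the acyclicity lemma (that reduction, and the divisibility argument killing the overall shift in the intertwining relation, are both correct), and then exploit the translation-and-twist invariance supplied by the construction of $f_{\cplx{K}}$, i.e.\ semihomogeneity, together with simplicity and $\Hom^{<0}(\mathcal G^{\bullet}_a,\mathcal G^{\bullet}_a)=0$.

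The genuine gap is exactly where you flag it. The implication ``every $\mathcal H^{i}(\mathcal G^{\bullet}_a)$ is invariant under translation-and-twist by the $g$-dimensional $\Gamma=f(\{0\}\times\widehat A)$, plus simplicity and no negative self-Homs, forces concentration in one degree'' is the entire content of the proposition, and it is asserted rather than proved; ``the rigidity of such $\Gamma$-invariant objects'' is not a citable black box. It does not follow formally: in the self-Hom spectral sequence the terms $\Hom(\mathcal H^{q},\mathcal H^{p})$ with $p<q$ sit in $E_2^{0,p-q}$, which can a priori be killed by outgoing differentials, so $\Hom^{<0}=0$ does not by itself exclude two nonzero cohomology sheaves, and neither does indecomposability. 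What is actually needed is the quantitative part of Mukai's theory \cite{Muk78}: that the $\mathcal H^{i}$ are semihomogeneous with the same connected stabilizer (hence the same slope and, suitably normalized, proportional Chern characters), and the orthogonality relations for simple semihomogeneous sheaves of equal slope (either a nonzero $\Hom$ exists or all $\Ext^{j}$ vanish); only with these inputs does the spectral-sequence argument close. You would also need to handle the degenerate shapes of $\Gamma$: it is Lagrangian but need not surject onto $B$ (for instance if $\beta=0$ the invariance only says the $\mathcal H^{i}$ are fixed by tensoring with all of $\Pic^{0}(B)$, so they are supported on points and the argument takes a different form), and Mukai's structure theory is cleanest for locally free sheaves, so the case of smaller support requires separate care. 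In short: a correct plan, with the decisive lemma left unproved.
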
 

\begin{rem}As a consequence we get that all derived equivalences between Abelian varieties are Fourier-Mukai transforms.
\end{rem}

If $A$ is an Abelian variety, following  Mukai \cite{Muk78, Muk94}, for a coherent sheaf $\mathcal{E}$ on $A$ we consider the subgroup
$$\Upsilon(\mathcal{E})=\{ (a,\alpha)\in A\times \hat{A}\text{ such that } T_a^\ast \mathcal{E}\simeq \mathcal{E}\otimes \mathcal{P}_\alpha \} \subset A\times \hat{A}\, ,$$ where $T_a\colon A\to A$ denotes the translation by $a\in A$.
The sheaf $\mathcal{F}$ is said to be {\it semihomogeneous} if $\dim \Upsilon(\mathcal{E})=\dim A$. 

In a previous paper \cite[Proposition 3.2]{LST13} we have made more precise the nature of the kernels that induce derived equivalences.

\begin{prop} \label{flatness}Let $A, B$ be Abelian varieties over an arbitrary field. The sheaf $\mathcal{K}$ associated to an equivalence $$\Phi^{\cplx{K}}\colon \cdbc{A}\isom  \cdbc{B}$$ is a semihomogeneous sheaf and it
is flat over both factors.
\end{prop}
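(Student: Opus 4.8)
The plan is to exploit that any equivalence intertwines, up to a shift, the ``translation and twist by a degree‑zero line bundle'' autoequivalences on the two sides; this forces $\Upsilon(\mathcal K)$ to be as large as possible, hence $\mathcal K$ semihomogeneous, and flatness then drops out of generic flatness plus the translation symmetry so obtained. Write $\Phi=\Phi^{\mathcal K[n]}_{A\to B}$ and let $\Psi^{A}_{(a,\alpha)}\colon\mathcal E\mapsto T^{\ast}_{a}\mathcal E\otimes\mathcal P_{\alpha}$ and $\Psi^{B}_{(b,\beta)}\colon\mathcal G\mapsto T^{\ast}_{b}\mathcal G\otimes\mathcal Q_{\beta}$ be these autoequivalences of $\cdbc{A}$ and $\cdbc{B}$; by Orlov's description of $\Aut(\cdbc{A})$, together with the shifts $[m]$ they make up $\Ker\gamma_{A}=\mathbb{Z}\oplus(A\times\widehat A)$. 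First I would record the identities of Fourier--Mukai kernels
$$\Phi^{\mathcal K}_{A\to B}\circ\Psi^{A}_{(a,\alpha)}=\Phi^{(T_{-a}\times\Id)^{\ast}\mathcal K\,\otimes\,\pi_{1}^{\ast}\mathcal P_{\alpha}}_{A\to B},\qquad \Psi^{B}_{(b,\beta)}\circ\Phi^{\mathcal K}_{A\to B}=\Phi^{(\Id\times T_{b})^{\ast}\mathcal K\,\otimes\,\pi_{2}^{\ast}\mathcal Q_{\beta}}_{A\to B},$$
which follow from the projection formula and flat base change along translations, together with $T^{\ast}_{c}\mathcal P_{\gamma}\simeq\mathcal P_{\gamma}$ for $\mathcal P_{\gamma}$ of degree zero. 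Since Orlov's correspondence is natural (compatible with composition of functors and with inverses) and $\Psi^{A}_{(a,\alpha)}\in\Ker\gamma_{A}$, the autoequivalence $\Phi\circ\Psi^{A}_{(a,\alpha)}\circ\Phi^{-1}$ of $\cdbc{B}$ has $\gamma_{B}(\Phi\circ\Psi^{A}_{(a,\alpha)}\circ\Phi^{-1})=f\circ\Id\circ f^{-1}=\Id$ (where $f$ is the symplectic isomorphism attached to $\Phi$), hence lies in $\Ker\gamma_{B}=\mathbb{Z}\oplus(B\times\widehat B)$ and equals $[m]\circ\Psi^{B}_{(b,\beta)}$ for some $m\in\mathbb{Z}$ and $(b,\beta)\in B\times\widehat B$ depending on $(a,\alpha)$ (in fact $(b,\beta)=f(a,\alpha)$ up to sign and the involution $\ddagger$; only its bijectivity will be used). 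Rewriting this as $\Phi^{\mathcal K}_{A\to B}\circ\Psi^{A}_{(a,\alpha)}\simeq\Psi^{B}_{(b,\beta)}\circ\Phi^{\mathcal K}_{A\to B}[m]$ and comparing kernels via the uniqueness in Orlov's representability theorem: the left‑hand kernel is a genuine sheaf, the right‑hand one a sheaf placed in degree $m$, so $m=0$; pulling the resulting isomorphism back by $\Id\times T_{-b}$ yields $(T_{-a}\times T_{-b})^{\ast}\mathcal K\simeq\mathcal K\otimes\pi_{1}^{\ast}\mathcal P_{-\alpha}\otimes\pi_{2}^{\ast}\mathcal Q_{\beta}$, that is, $((-a,-b),(-\alpha,\beta))\in\Upsilon(\mathcal K)$.

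Performing this construction over the base $A\times\widehat A$ --- every ingredient makes sense over an arbitrary base, and $\Upsilon(\mathcal K)$ is a closed subgroup scheme of $(A\times B)\times\widehat{A\times B}$ --- produces a homomorphism of group schemes $\Theta\colon A\times\widehat A\to\Upsilon(\mathcal K)$, $(a,\alpha)\mapsto((-a,-b),(-\alpha,\beta))$, whose composition with the projection $\Upsilon(\mathcal K)\hookrightarrow(A\times B)\times\widehat{A\times B}\twoheadrightarrow A\times\widehat A$ is the automorphism $(a,\alpha)\mapsto(-a,-\alpha)$. Hence $\Theta$ is a closed immersion and $\dim\Upsilon(\mathcal K)\ge\dim(A\times\widehat A)=2\dim A=\dim(A\times B)$, using $\dim A=\dim B$ (which follows from the existence of $f$). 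As $\dim\Upsilon(\mathcal E)\le\dim X$ for every non‑zero coherent sheaf $\mathcal E$ on an Abelian variety $X$ (Mukai), equality holds and $\mathcal K$ is semihomogeneous. One may alternatively base change to $\overline{\Bbbk}$, run the count on closed points, and descend.

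For flatness over $A$: by generic flatness there is a dense open $U\subseteq A$ over which $\mathcal K$ is flat, equivalently for $a_{0}\in U$ the derived restriction $\bL j^{\ast}_{a_{0}}\mathcal K$ to the fibre $\{a_{0}\}\times B$ is concentrated in degree $0$, where $j_{a}\colon\{a\}\times B\hookrightarrow A\times B$. Specialising the relation above to $\alpha=0$ and letting $a$ vary shows that for \emph{every} $c\in A$ there exist $d\in B$ and $\mathcal Q\in\Pic^{0}(B)$ with $(T_{c}\times T_{d})^{\ast}\mathcal K\simeq\mathcal K\otimes\pi_{2}^{\ast}\mathcal Q$; applying $\bL j^{\ast}_{a_{0}}$ and using $(T_{c}\times T_{d})\circ j_{a_{0}}=j_{a_{0}+c}\circ T_{d}$ gives $\bL j^{\ast}_{a_{0}+c}\mathcal K\simeq T^{\ast}_{-d}(\bL j^{\ast}_{a_{0}}\mathcal K)\otimes\mathcal Q$, a translate‑and‑twist of a sheaf, hence itself a sheaf. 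Since $a_{0}+c$ runs over $A$, $\bL j^{\ast}_{a}\mathcal K$ is a sheaf for all $a$, which is exactly flatness of $\mathcal K$ over $A$. Flatness over $B$ follows symmetrically, using that surjectivity of $f$ onto $B\times\widehat B$ makes every point of $B$ the $B$‑component of some $\Theta(a,\alpha)$, and restricting to the fibres $A\times\{b_{0}\}$ with $b_{0}$ in the flat locus of $\pi_{2}$.

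The step I expect to demand the most care is the family argument for semihomogeneity: one must either genuinely realise $\Upsilon(\mathcal K)$ as a group scheme and construct $\Theta$ as a scheme morphism, or reduce to $\overline{\Bbbk}$ and descend, checking in either case that the forced vanishing $m=0$ and the base‑change identities survive the reduction; and one must invoke rather than re‑prove the inequality $\dim\Upsilon(\mathcal E)\le\dim X$. Everything else --- the kernel identities, the comparison through representability, and the restriction to fibres --- is formal.
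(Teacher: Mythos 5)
The paper itself does not spell out a proof of this proposition: it is quoted from \cite[Proposition 3.2]{LST13}. Your argument follows the standard route for this statement and is essentially sound: obtain the intertwining $\Phi\circ\Psi^A_{(a,\alpha)}\simeq\Psi^B_{(b,\beta)}\circ\Phi$, compare kernels through the uniqueness part of Orlov's representability theorem to produce points of $\Upsilon(\mathcal K)$ over every $(a,\alpha)\in A\times\widehat A$, bound $\dim\Upsilon(\mathcal K)$ from above by Mukai's inequality to get semihomogeneity, and propagate generic flatness by the resulting translation symmetry. The two kernel identities, the forcing of $m=0$, and the fibre-restriction computation $\bL j_{a_0}^*(T_c\times T_d)^*\mathcal K\simeq T_d^*\bL j_{a_0+c}^*\mathcal K$ are all correct.

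The one genuine defect is the source of the intertwining relation. You derive it from $\Ker\gamma_B=\mathbb Z\oplus(B\times\widehat B)$, i.e.\ from the exactness in Theorem \ref{thm:sequenceav}. That theorem is stated only over an algebraically closed field of characteristic zero, whereas Proposition \ref{flatness} is asserted over an arbitrary field; it is also the very theorem whose proof this paper is in the business of completing, so resting on it here is both too restrictive and logically uncomfortable. The repair is cheap: Orlov's correspondence of Theorem \ref{prop:orlov}, valid over an arbitrary field, is \emph{constructed} from the property $\Phi\circ\Psi^A_{(a,\alpha)}\simeq\Psi^B_{f_{\cplx{K}}(a,\alpha)}\circ\Phi$ with no shift, so you should invoke that directly. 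Doing so hands you $(b,\beta)=f_{\cplx{K}}(a,\alpha)$ as an isomorphism of Abelian varieties, which simultaneously settles the group-scheme formalization of $\Theta$ that you flag as delicate and gives at once the surjectivity onto the $B$-components needed for flatness over $B$. Two smaller points: over a non-closed field the base change to $\overline\Bbbk$ is needed not only for the dimension count but also in the flatness step (translation by a non-rational closed point of $A$ is not defined over $\Bbbk$), after which flatness descends by faithfully flat descent; and Mukai's upper bound $\dim\Upsilon(\mathcal E)\le\dim X$ for nonzero coherent sheaves is indeed a quotable result of \cite{Muk78}, so invoking it rather than reproving it is legitimate.
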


Therefore, every derived equivalence between Abelian varieties over an arbitrary field is induced by a Fourier-Mukai transform whose kernel is, up to a shift, a semihomogeneous sheaf. Taking into account the compatibility of Fourier-Mukai functors with moduli problems, it follows that under suitable circumstances semihomogeneous sheaves should correspond to universal families.

The following result of Orlov \cite[Theorem 4.14]{Or02} gives a complete description of the group of autoequivalences of the derived category $\cdbc{A}$ of an  Abelian variety $A$ over an algebraically closed field of characteristic zero.
\begin{thm}\label{thm:sequenceav} Let $A$ be an Abelian variety over an algebraically closed field $\Bbbk$ of characteristic zero. Then one has the following exact sequence of groups $$0\to \mathbb{Z}\oplus(A\times \hat{A})\to \Aut(\cdbc{A})\xrightarrow{\gamma_A} \Sp(A\times\hat{A})\to 1\, ,$$ where the autoequivalence defined by
 $(n, a, L)\in \mathbb{Z}\oplus (A\times \hat{A})$ is $$\Phi^{(n, a, L)}(\cplx{E})=T_{a\ast}(\cplx{E})\otimes L[n]\, .$$
\end{thm}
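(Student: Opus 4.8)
The plan is to prove exactness of the sequence at each of its three terms. Since $\gamma_A$ is already known to be a group homomorphism (this is the Proposition recalled above), the work splits into identifying $\ker\gamma_A$ with $\mathbb Z\oplus(A\times\widehat A)$ and proving that $\gamma_A$ is surjective. For the kernel I would first check that $(n,a,L)\mapsto\Phi^{(n,a,L)}$ is an \emph{injective} group homomorphism whose image lies in $\ker\gamma_A$. The homomorphism property is the identity $\Phi^{(n_2,a_2,L_2)}\circ\Phi^{(n_1,a_1,L_1)}\cong\Phi^{(n_1+n_2,\,a_1+a_2,\,L_1\otimes L_2)}$, which follows from the projection formula once one uses that a line bundle in $\widehat A=\Pic^0(A)$ is translation invariant; injectivity follows because $\Phi^{(n,a,L)}$ evaluated on skyscraper sheaves, on $\cO_A$, and on their shifts recovers $a$, $L$ and $n$. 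That $\Phi^{(n,a,L)}\in\ker\gamma_A$ holds because its kernel is, up to a shift, the line bundle $L$ pushed forward along the graph $\Gamma_a\colon A\to A\times A$, $x\mapsto(x,a+x)$, which is a semihomogeneous sheaf on $A\times A$ whose subgroup $\Upsilon$ has identity component the graph of $\Id_{A\times\widehat A}$; hence its associated symplectic automorphism is the identity.

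Conversely, suppose $\gamma_A(\Phi^{\cplx K})=\Id$. By Propositions \ref{VarAb} and \ref{flatness} the kernel is $\mathcal K[n]$ with $\mathcal K$ a semihomogeneous sheaf on $A\times A$, flat over both factors, and $f_{\cplx K}$ is the symplectic automorphism whose graph is the identity component $\Upsilon^0(\mathcal K)\subset(A\times A)\times(\widehat A\times\widehat A)$, read through the identification $(A\times A)\times(\widehat A\times\widehat A)\cong(A\times\widehat A)\times(A\times\widehat A)$ underlying Orlov's correspondence. Thus $f_{\cplx K}=\Id$ forces $\Upsilon^0(\mathcal K)$ to be the diagonal, i.e.\ $\mathcal K$ is, up to a degree-zero twist, invariant under the translations by the graph of $\Id_A$; feeding this into Mukai's structure theory of semihomogeneous sheaves, together with flatness over each factor and the constraint on $\rk\mathcal K$ coming from the equivalence property, forces $\mathcal K\cong(\Gamma_a)_\ast L$ for suitable $a\in A$, $L\in\widehat A$. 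Hence $\Phi^{\cplx K}\cong\Phi^{(n,a,L)}$ and $\ker\gamma_A=\mathbb Z\oplus(A\times\widehat A)$.

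For surjectivity, given $f=\begin{pmatrix}\alpha&\beta\\\gamma&\delta\end{pmatrix}\in\Sp(A\times\widehat A)$ one must exhibit a Fourier--Mukai kernel $\cplx K$ with $f_{\cplx K}=f$. When $\beta\colon\widehat A\to A$ is an isogeny this is Orlov's construction: the graph $\Gamma_f$ of $f$ is a Lagrangian Abelian subvariety of $X\times X$ for the product symplectic biextension $p_1^\ast L\otimes p_2^\ast L^{-1}$, with $L=p_{14}^\ast\mathcal P\otimes p_{32}^\ast\mathcal P^{-1}$; the isogeny hypothesis makes the projection $\Gamma_f\to A\times A$ surjective and generically finite, so Mukai's correspondence between subgroups and semihomogeneous sheaves yields a semihomogeneous vector bundle $\mathcal K$ on $A\times A$ with $\Upsilon^0(\mathcal K)=\Gamma_f$, and one verifies that $\Phi^{\mathcal K}$ is an equivalence with $f_{\mathcal K}=f$. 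For an arbitrary $f$ one reduces to this case by factoring $f=f_2\circ f_1$ into symplectic isomorphisms whose off-diagonal blocks are isogenies and composing the corresponding equivalences, using the compatibility of Orlov's correspondence with composition.

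The step I expect to be the main obstacle is precisely this factorization: writing an arbitrary symplectic isomorphism as a product of two whose ``$\beta$-block'' is an isogeny. Orlov asserts that this is easy but supplies no argument, and for a non-simple $A$ it is genuinely delicate, since it requires controlling the isotropic and Lagrangian Abelian subvarieties of $X$ and the lattice of Abelian subvarieties of $A$. In the simple case --- the setting of Theorem \ref{thm:orlov-corrected}, where the argument is carried through in detail --- the endomorphism algebra of $A$ is a division algebra, so every nonzero homomorphism between simple Abelian varieties isogenous to $A$ is an isogeny; this rigidity is exactly what makes the required factorization available, and so completes the proof of surjectivity, and hence of the whole exact sequence, for simple Abelian varieties.
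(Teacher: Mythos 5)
Your proposal does not prove the theorem as stated, and it cannot be measured against a proof in the paper because the paper gives none: Theorem \ref{thm:sequenceav} is quoted as Orlov's result, and the text immediately afterwards explains that Orlov's proof of the surjectivity of $\gamma_A$ is incomplete. The missing step is exactly the one you flag yourself: for $f\in\Sp(A\times\widehat A)$ whose block $\beta\colon\widehat A\to A$ is not an isogeny, Orlov asserts without proof that $f=f_2\circ f_1$ with both $\beta$-blocks isogenies, and no proof of this factorization exists in the literature. Since you reproduce that assertion and then concede it only for simple $A$, your argument establishes surjectivity --- and hence the exact sequence --- only in the simple case, whereas the statement is for an arbitrary Abelian variety over an algebraically closed field of characteristic zero. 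The identification of $\ker\gamma_A$ with $\mathbb Z\oplus(A\times\widehat A)$ is fine as a sketch (it is Orlov's, and not where the difficulty lies); the genuine gap is the surjectivity for non-simple $A$, which the paper itself leaves open and defers to future work.

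Even in the simple case your final paragraph misdescribes how the rigidity is used. The paper's Theorem \ref{thm:orlov-corrected} does \emph{not} produce the factorization $f=f_2\circ f_1$; instead, Proposition \ref{prop:key-result} shows that when $\End^0(A)$ is a division algebra the block $\beta$ is either an isogeny or zero, so a trichotomy replaces the factorization. If $\beta$ is an isogeny one applies Orlov's construction directly; if $\beta=0$ and $\gamma\neq 0$, then $\gamma$ is an isogeny and one passes to $(f^\ddagger)^s\in\Sp(\widehat A\times A,\widehat B\times B)$, applies Orlov's construction there, and conjugates the resulting equivalence by the Poincar\'e transforms $\Phi^{\mathcal P_A}$, $\Phi^{\mathcal P_B}$, using Lemma \ref{lem:ddagger} and $f_{\mathcal P_A}=\psi_{L_A}$ to check that the composite realizes $f$; if $\beta=\gamma=0$, then $f$ is diagonal with $\delta=\widehat\alpha^{-1}$ and the kernel is $(\alpha\times\Id)^*\mathcal O_{\Delta}$. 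So the division-algebra hypothesis is used to make the factorization \emph{unnecessary}, not ``available''; asserting that it yields the factorization is an unsubstantiated claim that your proposal would still need to justify.
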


However the proof of this theorem given by Orlov is incomplete. Let us explain why it is so. The problem arises with proving the surjectivity of $\gamma_A$ and it depends on the more general:

\begin{claim*} For any pair of Abelian varieties $A$, $B$ over an algebraically closed field the map $$\gamma_{A,B}\colon\Eq(\cdbc{A},\cdbc{B})\to \Sp(A\times\widehat A,B\times\widehat B)$$ is surjective.
\end{claim*}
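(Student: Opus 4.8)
\emph{The plan.}
I would reduce the Claim, first to the case $B=A$ and then to a single lemma of linear-algebraic flavour, using repeatedly that $\gamma$ is compatible with composition of Fourier--Mukai functors and with quasi-inverses --- the mechanism behind the Proposition above asserting that $\gamma_A$ is a group morphism, and equally valid for a pair. If $\Sp(A\times\widehat A,B\times\widehat B)=\varnothing$ there is nothing to prove; otherwise Theorem~\ref{thm:polishchuk} gives a derived equivalence $\Phi_0\colon\cdbc A\isom\cdbc B$, and, putting $g_0=\gamma_{A,B}(\Phi_0)$, an isomorphism $f\in\Sp(A\times\widehat A,B\times\widehat B)$ lies in $\Img\gamma_{A,B}$ if and only if $g_0^{-1}\circ f\in\Img\gamma_A$ (because $f=\gamma_{A,B}(\Phi_0\circ\Psi)\Leftrightarrow g_0^{-1}\circ f=\gamma_A(\Psi)$). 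Hence the Claim, for all pairs at once, is equivalent to the surjectivity of $\gamma_A\colon\Aut(\cdbc A)\to\Sp(A\times\widehat A)$ for every Abelian variety $A$, i.e.\ to the Claim for $(A,A)$.

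\emph{Generation by the proved part of Orlov's construction.}
Since $\gamma_A$ is a group morphism, $\Img\gamma_A$ is a subgroup of $\Sp(A\times\widehat A)$, and by the portion of Theorem~\ref{prop:orlov} that Orlov actually proves (applied with $B=A$) it contains every $f=\begin{pmatrix}\alpha&\beta\\ \gamma&\delta\end{pmatrix}$ whose block $\beta\colon\widehat A\to A$ is an isogeny. Given an arbitrary $f\in\Sp(A\times\widehat A)$ and a symmetric homomorphism $\psi\colon\widehat A\to A$ (so that $u_\psi=\begin{pmatrix}1&\psi\\ 0&1\end{pmatrix}\in\Sp(A\times\widehat A)$, with $u_\psi^{-1}=\begin{pmatrix}1&-\psi\\ 0&1\end{pmatrix}$), one has $u_\psi f=\begin{pmatrix}\alpha+\psi\gamma&\beta+\psi\delta\\ \gamma&\delta\end{pmatrix}$; so if $\psi$ can be picked with both $\psi$ and $\beta+\psi\delta$ isogenies, then $u_\psi$, $u_\psi^{-1}$ and $u_\psi f$ all have isogeny $\beta$-block and hence all lie in $\Img\gamma_A$, whence $f=u_\psi^{-1}\circ(u_\psi f)\in\Img\gamma_A$. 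Therefore $\Img\gamma_A=\Sp(A\times\widehat A)$ --- and with it the whole Claim --- will follow once one proves the \emph{key lemma: for every $f=\begin{pmatrix}\alpha&\beta\\ \gamma&\delta\end{pmatrix}\in\Sp(A\times\widehat A)$ there is a symmetric isogeny $\psi\colon\widehat A\to A$ with $\beta+\psi\delta\colon\widehat A\to A$ again an isogeny}.

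\emph{The key lemma is the main obstacle.}
This is precisely the point that Orlov disposed of with ``it is easy to see'', and for which no proof is in the literature. I would first note that the symmetric homomorphisms $\widehat A\to A$ (those of the form $\phi_{\widehat L}$, $\widehat L\in\Pic(\widehat A)$) form a lattice in the $\bbQ$-vector space of symmetric elements of $\Hom(\widehat A,A)_\bbQ$ and are therefore Zariski-dense in it; since ``$\psi$ an isogeny'' and ``$\beta+\psi\delta$ an isogeny'' are each Zariski-open conditions on $\psi_\bbQ$ (the first realized already by ample classes, via Serre's theorem on ample twists), the lemma reduces to the single statement over $\bbQ$ that there is a symmetric $\psi_\bbQ$ with $\beta_\bbQ+\psi_\bbQ\delta_\bbQ\colon\widehat A_\bbQ\to A_\bbQ$ invertible. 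To prove that, one uses $f^\dagger\circ f=\Id$: it gives that $(\beta,\delta)\colon\widehat A\to A\times\widehat A$ is injective (so $\beta+\psi\delta$ is automatically injective on $\ker\delta_\bbQ$), that the composite $\widehat\delta\beta$ is symmetric (so $\psi_\bbQ=-\beta_\bbQ\delta_\bbQ^{-1}$, corrected by an ample class, works when $\delta$ is an isogeny), and that $\beta$ is already an isogeny when $\delta=0$. In general I would split $A$ up to isogeny into isotypic factors by Poincar\'e reducibility --- a decomposition that $\beta_\bbQ$ and $\delta_\bbQ$ respect --- reducing to one factor $A_i^{n_i}$ at a time, governed by the symplectic group of the matrix algebra $M_{n_i}(\End(A_i)_\bbQ)$ with its Rosati involution, and finish by Albert's classification, verifying in each of the four types that the symmetric homomorphisms, together with the symplectic constraints, leave enough room to correct $\beta$ to an isomorphism. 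The delicate cases are those with a small space of symmetric homomorphisms --- Albert types III and IV, and multiplicity $n_i>1$; the simple multiplicity-one case is Theorem~\ref{thm:orlov-corrected} of the present paper, and carrying the remaining bookkeeping through is exactly what is missing from \cite{Or02}.
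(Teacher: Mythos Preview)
The paper does \emph{not} prove this Claim. It states the Claim as Orlov's assertion, explains why Orlov's argument is incomplete, and then proves only the special case where $\End^0(A)$ (or $\End^0(B)$) is a division algebra (Theorem~\ref{thm:orlov-corrected}); the general case is explicitly left open (``We plan to address the general case in a future work''). So there is no ``paper's own proof'' to compare against.

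Your proposal is accordingly not a proof either, but a programme. The two reductions you give are sound: first, the passage from the pair $(A,B)$ to the single variety $A$ via Polishchuk's theorem and multiplicativity of $\gamma$ is correct; second, the reduction of surjectivity of $\gamma_A$ to your key lemma is correct, and in fact can be slightly sharpened: for \emph{any} symmetric $\psi\colon\widehat A\to A$ one has $u_\psi\in\Img\gamma_A$ without assuming $\psi$ is an isogeny, since every such $\psi$ equals $\phi_M$ for some $M\in\Pic(\widehat A)$ and then $u_\psi=\gamma_A\bigl((\Phi^{\mathcal P_A})^{-1}\circ(-\otimes M)\circ\Phi^{\mathcal P_A}\bigr)$. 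So the key lemma really reduces to: for every symplectic $f=\begin{pmatrix}\alpha&\beta\\ \gamma&\delta\end{pmatrix}$ there exists a symmetric $\psi$ with $\beta+\psi\delta$ an isogeny.

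The gap is that you do not prove this key lemma. Your Zariski-density argument correctly shows that it suffices to find, over $\bbQ$, a symmetric $\psi_\bbQ$ with $\beta_\bbQ+\psi_\bbQ\delta_\bbQ$ invertible; and your observation that $\widehat\delta\beta$ is symmetric (from $f^\dagger f=\Id$) is correct and handles the case where $\delta$ is an isogeny. But for general $\delta$ you only say ``split into isotypic factors and finish by Albert's classification'', which is exactly the step that is missing in Orlov's paper and that the present paper does not claim to fill. The case-by-case analysis over $M_{n_i}(D_i)$ with its Rosati involution is genuine work --- especially for Albert types III and IV and for $n_i>1$, where the space of symmetric elements is smaller --- and you have not carried it out. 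In short: your outline is more detailed and more structured than Orlov's one-line claim, and the reductions are a real contribution, but the proof is still incomplete at precisely the same place.
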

 
In order to prove it, given a symplectic isomorphism $f\in \Sp(A\times\hat{A},B\times\widehat B)$ with matrix representation $f=\begin{pmatrix} \alpha &\beta\\
 \gamma & \delta
\end{pmatrix}$, one has to build out of it a concentrated kernel $\cplx{K}\in \cdbc{A\times B}$ such that $\Phi^{\cplx{K}}\colon \cdbc{A}\isom  \cdbc{B}$ is a derived equivalence and its associated symplectic automorphism $f_{\cplx{K}}$ is $f$. Orlov proves the existence of such a kernel under the assumption that $\beta\colon \widehat A\to B$ is an isogeny \cite[Construction 4.10, Propositions 4.11, 4.12]{Or02}. After doing this, he claims \cite[pag. 591]{Or02} that if $f$ is such that $\beta$ is not an isogeny, then it is easy to see that $f$ can be written as $f=f_2\circ f_1$ where $f_1\in\Sp(A\times\widehat A)$, $f_2\in \Sp(A\times\widehat A,B\times\widehat B)$ and $\beta_1\colon \widehat A\to A$, $\beta_2\colon \widehat A\to B$ are isogenies. After quite some thought on this claim the authors convinced themselves that this statement is by no means ``easy to see''. Therefore further ideas are needed for its proof.

One could argue that the Claim must follow from Polishchuk's Theorem \ref{thm:polishchuk}. However this is not the case since as we have already mentioned, given $f\in \Sp(A\times\widehat A,B\times\widehat B)$ one does not know if the equivalence $\Phi^\cplx{K}\colon \cdbc{A}\to\cdbc{B}$, arising from Theorem \ref{thm:polishchuk}, verifies $f_\cplx{K}=f$. That is, we do not know if Polishchuk's procedure does give a section of the map  $\gamma_{A,B}\colon\Eq(\cdbc{A},\cdbc{B})\to \Sp(A\times\widehat A,B\times\widehat B)$.

In order to partially remedy the situation in what follows we give a direct proof of the Claim for the building blocks  of the category of Abelian varieties, namely simple Abelian varieties. We achieve this goal by studying the structure of symplectic isomorphisms between Abelian varieties over an arbitrary base field. For this we need first the following result that is interesting on its own.

\begin{prop}\label{prop:symplectic-isomorphism-isogenous} Let $A$, $B$ be Abelian varieties over an arbitrary field such that there exists a symplectic isomorphism $f\in\Sp(A\times\widehat A,B\times\widehat B)$, then $A$ and $B$ are isogenous $A\sim B$.
\end{prop}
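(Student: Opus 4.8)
The plan is to discard almost all of the symplectic structure and argue in the isogeny category of abelian varieties over $\Bbbk$. By Poincar\'e's complete reducibility theorem this category is semisimple: every abelian variety is isogenous to a product $\prod_i S_i^{n_i}$ of simple abelian varieties, and this decomposition is unique up to reordering. In particular, cancellation holds in the form $X^2\sim Y^2\Rightarrow X\sim Y$, obtained by comparing the multiplicities of each simple factor.

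First I would observe that a symplectic isomorphism $f\in\Sp(A\times\widehat A,B\times\widehat B)$ is, in particular, an isomorphism of abelian varieties, so $A\times\widehat A$ and $B\times\widehat B$ are isogenous. Next I would recall that every abelian variety $X$ is isogenous to its dual $\widehat X$: if $\calL$ is an ample line bundle on $X$, then the polarization $\phi_\calL\colon X\to\widehat X$ has finite kernel $K(\calL)$ and is therefore an isogeny. Applying this to $A$ and to $B$ gives $\widehat A\sim A$ and $\widehat B\sim B$, whence
$$A\times A\;\sim\;A\times\widehat A\;\sim\;B\times\widehat B\;\sim\;B\times B.$$
Finally, cancellation in the semisimple isogeny category turns $A^2\sim B^2$ into $A\sim B$, which is the assertion.

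There is no genuine obstacle here; the whole content is the passage to the isogeny category together with the two standard inputs, namely the existence of a polarization (so that $X\sim\widehat X$) and the uniqueness of the decomposition into simple factors (so that $X^2\sim Y^2$ cancels). It is worth noting that the symplectic relations $f^\dagger\circ f=\Id_{A\times\widehat A}$ and $f\circ f^\dagger=\Id_{B\times\widehat B}$ do yield the stronger-looking information that $A$ sits as an isogeny-direct-factor of $B\times\widehat B$ and $B$ as one of $A\times\widehat A$, but for the present statement this is redundant: the bare isomorphism $A\times\widehat A\cong B\times\widehat B$ already suffices. (This is precisely the form in which the proposition is used later: when $A$ is simple, it forces $B$ to be isogenous to $A$, hence also simple.)
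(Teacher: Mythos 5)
Your proof is correct and takes essentially the same route as the paper's: both discard the symplectic structure, use that $X\sim\widehat X$ via a polarization, and then invoke the uniqueness of the Poincar\'e decomposition into simple factors to cancel $A^2\sim B^2$ down to $A\sim B$. The only difference is cosmetic --- the paper writes out the simple-factor multiplicities explicitly rather than phrasing the last step as a cancellation lemma.
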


\begin{proof} It is well known that every Abelian variety $A$ is isogenous to its dual Abelian variety $\widehat A$. Therefore by the complete reducibility theorem of Poincar\'e we have $A\sim A_1^{m_1}\times \cdots\times A_r^{m_r}$ for simple non isogenous Abelian varieties $A_1,\ldots, A_r$ unique up to isogenies and permutations. Since $A\sim\widehat A$ we get $A\times\widehat A\sim A_1^{2m_1}\times \cdots\times A_r^{2m_r}$. Repeating the argument with $B$ we obtain $B\times\widehat B\simeq B_1^{2n_1}\times \cdots\times B_s^{2n_s}$. Since $A\times \widehat A\simeq  B\times\widehat B$, once again by the complete reducibility theorem we obtain that $r=s$ and after a permutation we may assume $A_1\sim B_1, \ldots ,A_r\sim B_r$ and $m_1=n_1,\ldots m_r=n_r$. This implies that $A\sim B$ and the proof is finished.
\end{proof}

Given  Abelian varieties $A$, $B$ it is well known that $\Hom(A,B)$ is a torsion free $\bbZ$-module under addition of morphisms. Therefore we have a natural injection $$\Hom(A,B)\hookrightarrow\Hom^0(A,B):=\bbQ\otimes_\bbZ\Hom(A,B).$$  An element $\xi\in\Hom^0(A,B)$ is called a quasi-isogeny if there exists $\zeta\in \Hom^0(B,A)$ such that $\zeta\circ\xi=\Id_A$, $\xi\circ\zeta=\Id_B$. In this case we say that $A$ and $B$ are quasi-isogenous.

\begin{lem}\label{lem:prop-isogenous-ab-var} Let $A$, $B$ be Abelian varieties over an arbitrary field. One has the following properties:
\begin{enumerate}
\item $A$ and  $B$ are isogenous if and only if they are quasi-isogenous.
\item If $A$ and  $B$ are isogenous then $\End^0(A)\simeq \End^0(B)$.
\item An endomorphism $\varphi\in \End(X)$ is an isogeny if and only if it is invertible in the ring $\End^0(A)$.
\item $\End^0(A)$ is a division algebra if and only if every endomorphism $\alpha\in\End(A)$ is zero or an isogeny.
\end{enumerate}
\end{lem}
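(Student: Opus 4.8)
The plan is to prove (1) first and to deduce (2), (3), (4) from it by routine manipulations inside $\Hom^0$. For the substantive direction of (1), suppose $\phi\colon A\to B$ is an isogeny; its kernel is a finite commutative group scheme, hence killed by some positive integer $n$ (for instance its order), so $\Ker\phi\subseteq\Ker[n]_A$ and $[n]_A$ factors as $\psi\circ\phi$ for a unique $\psi\in\Hom(B,A)$. Composing on the right with $\phi$ and using that $\phi$ is an epimorphism yields $\phi\circ\psi=[n]_B$, so $\tfrac1n\psi\in\Hom^0(B,A)$ is a two-sided inverse of $\phi$, and $A$ and $B$ are quasi-isogenous. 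Conversely, given a quasi-isogeny $\xi\in\Hom^0(A,B)$ with inverse $\zeta\in\Hom^0(B,A)$, I would clear denominators to write $\xi=\tfrac1m\phi$, $\zeta=\tfrac1n\psi$ with $\phi\in\Hom(A,B)$, $\psi\in\Hom(B,A)$; then $\psi\circ\phi=[mn]_A$ and $\phi\circ\psi=[mn]_B$ are isogenies. From the first, $\Ker\phi\subseteq A[mn]$ is finite and $\psi$ is surjective, so $\dim B\ge\dim A$; from the second, $\phi$ is surjective, so $\dim A\ge\dim B$. Hence $\phi$ is a homomorphism of Abelian varieties of the same dimension with finite kernel, i.e. an isogeny, and $A\sim B$.

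For (2), fix an isogeny $\phi\colon A\to B$; by (1) it becomes invertible in $\Hom^0$, and the conjugation $\alpha\mapsto\phi\circ\alpha\circ\phi^{-1}$ is a ring homomorphism $\End^0(A)\to\End^0(B)$ with two-sided inverse $\beta\mapsto\phi^{-1}\circ\beta\circ\phi$, hence a ring isomorphism. Item (3) is the case $A=B$ of (1): an isogeny $\varphi\in\End(A)$ has a quasi-inverse in $\End^0(A)$ by the forward direction, and conversely a $\varphi\in\End(A)$ invertible in $\End^0(A)$ is a quasi-isogeny and therefore, by the converse direction of (1) applied with $m=1$, an isogeny.

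Finally, for (4) I would use that $\End^0(A)=\bbQ\otimes_\bbZ\End(A)$, so every nonzero $\xi\in\End^0(A)$ can be written $\xi=\tfrac1n\alpha$ with $\alpha\in\End(A)$ nonzero. If every nonzero endomorphism of $A$ is an isogeny, then $\alpha$ is an isogeny, hence invertible in $\End^0(A)$ by (3), and so is $\xi$; thus $\End^0(A)$ is a division algebra. Conversely, if $\End^0(A)$ is a division algebra and $\alpha\in\End(A)$ is nonzero, then $\alpha$ is invertible in $\End^0(A)$ and hence an isogeny by (3). The whole argument is bookkeeping with the identification $\Hom^0(A,B)=\bbQ\otimes_\bbZ\Hom(A,B)$; the only step that requires any care is the dimension count in the converse of (1) --- checking that a homomorphism underlying a quasi-isogeny genuinely has equal-dimensional source and target and finite kernel --- which I regard as the (mild) crux.
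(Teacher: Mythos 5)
Your proposal is correct and follows essentially the same route as the paper: establish the equivalence of isogeny and quasi-isogeny by producing the $\psi$ with $\psi\circ\varphi=n_A$, $\varphi\circ\psi=n_B$ in one direction and clearing denominators in the other, then deduce (2) by conjugation in $\Hom^0$ and (3), (4) as formal consequences. The only difference is one of detail: you prove the ``well known'' existence of the complementary isogeny and spell out the dimension count in the converse, where the paper simply cites these facts.
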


\begin{proof} $(1)$ If $\varphi\colon A\to B$ is an isogeny of degree $n$, it is well known that there exists an isogeny $\psi\colon B\to A$ such that $\psi\circ\varphi=n_A$, $\varphi\circ\psi=n_B$, and therefore $\varphi$ is a quasi-isogeny with inverse $\frac{\psi}{n}\in \Hom^0(B,A)$. The converse direction follows just by clearing denominators.

$(2)$ If $\xi\in \Hom^0(A,B)$ is a quasi isogeny with inverse $\zeta\in \Hom^0(B,A)$ then we define a ring morphism $$c\colon \End^0(A)\to\End^0(B)$$ as $c(f)=\xi\circ f\circ \zeta$ for any $f\in\End^0(A)$. This is a ring isomorphism with inverse $c^{-1}\colon\End^0(B)\to\End^0(A)$ defined as $c^{-1}(g)=\zeta\circ g\circ\xi$ for any $g\in\End^0(B)$.

$(3)$ follows from $(1)$ and $(4)$ is immediate from $(3)$.
\end{proof}

Let us recall that a non zero Abelian variety $A$ is simple if it has no proper Abelian subvarieties.  It is well known that if $A$ is simple then $\End^0(A)$ is a division algebra. More generally, if  $\End^0(A)$ is a division algebra then it follows from the complete reducibility theorem of Poincar\'e that $A$ is a simple Abelian variety. 

\begin{lem}\label{lem:isogenous-divison-algebra} Let $A$, $B$ be isogenous Abelian varieties over an arbitrary field. If $\End^0(A)$ is a division algebra then $\End^0(B)$ is also a division algebra and any morphism $\alpha\colon A\to B$ different from zero is an isogeny.
\end{lem}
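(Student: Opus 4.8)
The plan is to derive both assertions from the formal properties of the isogeny category recorded in Lemma~\ref{lem:prop-isogenous-ab-var}. The first one is immediate: since $A$ and $B$ are isogenous, Lemma~\ref{lem:prop-isogenous-ab-var}(2) provides a ring isomorphism $\End^0(A)\simeq\End^0(B)$, so $\End^0(B)$ is a division algebra whenever $\End^0(A)$ is.

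For the second assertion I would fix an isogeny $\psi\colon B\to A$, which exists because $A\sim B$. Given a nonzero morphism $\alpha\colon A\to B$, the key object is the endomorphism $\psi\circ\alpha\in\End(A)$. First I would check that $\psi\circ\alpha\neq0$: otherwise the image $\alpha(A)$, being an Abelian subvariety of $B$ and hence connected, would be contained in the finite subgroup $\Ker\psi$, forcing $\alpha(A)=0$ and thus $\alpha=0$, contrary to assumption. Since $\End^0(A)$ is a division algebra, Lemma~\ref{lem:prop-isogenous-ab-var}(4) then shows that the nonzero endomorphism $\psi\circ\alpha$ is an isogeny.

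It remains to transfer the isogeny property back to $\alpha$ itself. From $\Ker\alpha\subseteq\Ker(\psi\circ\alpha)$ and the finiteness of the latter it follows that $\Ker\alpha$ is finite, so $\dim\alpha(A)=\dim A$; and since $A\sim B$ forces $\dim A=\dim B$, the Abelian subvariety $\alpha(A)\subseteq B$ has full dimension and therefore equals $B$. Hence $\alpha$ is surjective with finite kernel, i.e.\ an isogeny. I do not anticipate any serious difficulty here; the only point that calls for a little care is this last step, where one combines the equality $\dim A=\dim B$ with the standard fact that the image of a morphism of Abelian varieties is an Abelian subvariety. One can also argue entirely inside $\Hom^0$: fixing in addition an isogeny $\varphi\colon A\to B$ with $\psi\circ\varphi=n_A$ and $\varphi\circ\psi=n_B$ and setting $\zeta=\frac{1}{n}\psi$, one has $\varphi\circ\zeta=\Id_B$, hence $\alpha=\varphi\circ(\zeta\circ\alpha)$ with $\zeta\circ\alpha\neq0$ invertible in the division algebra $\End^0(A)$; thus $\alpha$ is invertible in $\Hom^0(A,B)$, i.e.\ a quasi-isogeny, and clearing denominators as in the proof of Lemma~\ref{lem:prop-isogenous-ab-var}(1) shows that $\alpha$ is an isogeny.
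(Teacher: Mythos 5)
Your proof is correct and follows essentially the same route as the paper: compose $\alpha$ with an isogeny between $A$ and $B$ to obtain a nonzero endomorphism, invoke Lemma~\ref{lem:prop-isogenous-ab-var}(4) to see it is an isogeny, and then transfer the isogeny property back to $\alpha$ using $\dim A=\dim B$. The only cosmetic difference is that you form $\psi\circ\alpha\in\End(A)$ where the paper forms $\alpha\circ\psi\in\End(B)$ (after noting $\End^0(B)$ is a division algebra), and your closing remark via quasi-isogenies in $\Hom^0$ is a valid alternative finish.
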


\begin{proof} The first statement follows from part $(2)$ of Lemma \ref{lem:prop-isogenous-ab-var}. Let us consider now the second one. By hypothesis there exists an isogeny $\psi\colon B\to A$. Given a morphism $\alpha\colon A\to B$ then $\alpha \circ \psi \in \End^0(B)$ is either 0 or an isogeny due to $(4)$ of Lemma \ref{lem:prop-isogenous-ab-var}. If $\alpha\circ\psi=0$ then $\alpha=0$ as $\psi$ is surjective. On the other hand if $\alpha\circ\psi\colon B\to B$ is an isogeny then it follows that $\alpha$ is surjective since $\alpha\circ\psi$ is surjective. We conclude that $\alpha$ is an isogeny because $\dim(A)=\dim(B)$ since $A$ and $B$ are isogenous.
\end{proof}

Now we can give the key result: 

\begin{prop}\label{prop:key-result} Let $A$, $B$ Abelian varieties over an arbitrary field such that $\End^0(A)$ is a division algebra. If $f\in \Sp(A\times\hat{A},B\times\widehat B)$ is  a symplectic isomorphism  with matrix representation $f=\begin{pmatrix} \alpha &\beta\\
 \gamma & \delta
\end{pmatrix}$, then either $\beta\colon \widehat A\to B$ is an isogeny or $\alpha\colon A\xrightarrow{\sim}  B$ and $\delta\colon \widehat A\xrightarrow{\sim}\widehat B$ are isomorphisms and $\delta=\widehat\alpha^{-1}$.
\end{prop}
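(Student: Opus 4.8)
The plan is to first use the structure theory of isogenous Abelian varieties to constrain $\beta$, and then feed the degenerate case into the symplectic relations. By Proposition~\ref{prop:symplectic-isomorphism-isogenous} we have $A\sim B$, and since every Abelian variety is isogenous to its dual, the four varieties $A$, $\widehat A$, $B$, $\widehat B$ are all mutually isogenous; hence by Lemma~\ref{lem:prop-isogenous-ab-var}(2) each of $\End^0(\widehat A)$, $\End^0(B)$, $\End^0(\widehat B)$ is a division algebra as well. Applying Lemma~\ref{lem:isogenous-divison-algebra} to the morphism $\beta\colon\widehat A\to B$ between the isogenous varieties $\widehat A$ and $B$, with $\End^0(\widehat A)$ a division algebra, shows that $\beta$ is either zero or an isogeny. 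If $\beta$ is an isogeny we are in the first alternative and there is nothing more to prove, so from now on I would assume $\beta=0$, hence also $\widehat\beta=0$.

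Next I would use that $f$ is symplectic, which is equivalent to $f^\dagger\circ f=\Id_{A\times\widehat A}$ and $f\circ f^\dagger=\Id_{B\times\widehat B}$, where $f^\dagger=\begin{pmatrix}\widehat\delta&-\widehat\beta\\-\widehat\gamma&\widehat\alpha\end{pmatrix}$. Writing these two identities out in block form and substituting $\beta=\widehat\beta=0$, the four diagonal blocks give $\widehat\delta\circ\alpha=\Id_A$, $\widehat\alpha\circ\delta=\Id_{\widehat A}$, $\alpha\circ\widehat\delta=\Id_B$ and $\delta\circ\widehat\alpha=\Id_{\widehat B}$. Hence $\alpha\colon A\to B$ is an isomorphism with inverse $\widehat\delta$, and $\delta\colon\widehat A\to\widehat B$ is an isomorphism with inverse $\widehat\alpha$.

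It then remains to dualize the relation $\alpha^{-1}=\widehat\delta$. Since dualization of morphisms is a contravariant functor, $\widehat{\alpha^{-1}}=\widehat\alpha^{-1}$, while the canonical biduality identifications $k_A$ and $k_B$ give $\widehat{\widehat\delta}=\delta$; combining these yields $\delta=\widehat\alpha^{-1}$, which is precisely the second alternative.

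I do not expect a genuine obstacle here: once the dichotomy on $\beta$ is in place, the rest is a purely formal manipulation of the two symplectic identities. The step deserving the most care is the first one, namely checking that Lemma~\ref{lem:isogenous-divison-algebra} really applies to $\beta\colon\widehat A\to B$ --- this needs $\widehat A$ and $B$ to be isogenous and $\End^0(\widehat A)$ to be a division algebra, both of which follow from Proposition~\ref{prop:symplectic-isomorphism-isogenous} together with the fact that $A\sim\widehat A$. The conceptual point is that simplicity sharpens the dichotomy: a $\beta$ that fails to be an isogeny must in fact vanish, and a vanishing $\beta$ then rigidifies the entire symplectic isomorphism.
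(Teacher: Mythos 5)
Your proposal is correct and follows essentially the same route as the paper: Proposition~\ref{prop:symplectic-isomorphism-isogenous} plus Lemma~\ref{lem:isogenous-divison-algebra} give the dichotomy ``$\beta$ is an isogeny or $\beta=0$'', and in the latter case the block form of $f^\dagger\circ f=\Id$ and $f\circ f^\dagger=\Id$ yields that $\alpha$ and $\delta$ are isomorphisms with $\delta=\widehat\alpha^{-1}$. Your explicit check that Lemma~\ref{lem:isogenous-divison-algebra} applies to $\beta\colon\widehat A\to B$ (via $\widehat A\sim A\sim B$ and $\End^0(\widehat A)\simeq\End^0(A)$) is a welcome clarification of a step the paper leaves implicit.
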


\begin{proof} By Proposition \ref{prop:symplectic-isomorphism-isogenous} we know that $A$ and $B$ are isogenous and thus $\widehat A$ and $B$ are also isogenous. Since by hypothesis $\End^0(A)$ is a division algebra, thanks to Lemma \ref{lem:isogenous-divison-algebra} it follows that if $\beta$ is not an isogeny then it must be zero. If $\beta=0$ then one has $$f=\begin{pmatrix} \alpha &0\\
 \gamma & \delta
\end{pmatrix},\quad f^\dagger=\begin{pmatrix} \widehat\delta &0\\
 -\widehat\gamma & \widehat\alpha
\end{pmatrix},$$ and the equalities $f^\dagger\circ f=\Id_{A\times\widehat A}$, $f\circ f^\dagger=\Id_{B\times\widehat B}$ give $\widehat\delta\circ \alpha=\Id_A$, $\alpha\circ\widehat\delta=\Id_B$. Therefore $\alpha$ and $\delta$ are isomorphisms such that $\delta=\widehat\alpha^{-1}$ and the proof is finished.
\end{proof}

As a consequence we can prove the Claim in the particular case announced above:

\begin{thm}\label{thm:orlov-corrected} If $A$, $B$ are Abelian varieties over an algebraically closed field of characteristic zero, such that either $\End^0(A)$ or $\End^0(B)$ is a division algebra, then the map $$\gamma_{A,B}\colon\Eq(\cdbc{A},\cdbc{B})\to \Sp(A\times\widehat A,B\times\widehat B)$$ is surjective.
\end{thm}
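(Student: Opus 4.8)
The plan is to reduce the general case to the isogeny case that Orlov already handled in \cite[Construction 4.10, Propositions 4.11, 4.12]{Or02}, using Proposition~\ref{prop:key-result} to control the off-diagonal block $\beta$. First I would observe that it suffices to treat the case where $\End^0(A)$ is a division algebra: if instead $\End^0(B)$ is a division algebra, then given $f\in\Sp(A\times\widehat A,B\times\widehat B)$ one passes to $f^{-1}=f^\dagger\in\Sp(B\times\widehat B,A\times\widehat A)$, finds a Fourier--Mukai kernel on $B\times A$ realizing it, and then uses that $\gamma$ is compatible with composition and inversion of equivalences (the functoriality underlying the group morphism property in Orlov's Corollary~2.16 and the fact that $\gamma_{A,B}(\Phi)^{-1}=\gamma_{B,A}(\Phi^{-1})$) to transport the kernel back to $A\times B$. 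So assume from now on that $\End^0(A)$ is a division algebra.

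Next, given $f=\begin{pmatrix}\alpha&\beta\\\gamma&\delta\end{pmatrix}\in\Sp(A\times\widehat A,B\times\widehat B)$, I would apply Proposition~\ref{prop:key-result}, which yields a dichotomy. In the first case $\beta\colon\widehat A\to B$ is an isogeny, and then Orlov's construction directly produces a concentrated kernel $\cplx{K}\in\cdbc{A\times B}$ with $\Phi^{\cplx{K}}$ an equivalence and $f_{\cplx{K}}=f$; there is nothing more to do. In the second case $\alpha\colon A\isom B$ and $\delta\colon\widehat A\isom\widehat B$ are isomorphisms with $\delta=\widehat\alpha^{-1}$; here one writes down an explicit equivalence realizing $f$. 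The natural candidate is the composition of the pushforward equivalence $\alpha_*\colon\cdbc{A}\isom\cdbc{B}$ (with kernel the structure sheaf of the graph $\Gamma_\alpha\subset A\times B$) followed by the autoequivalence of $\cdbc{B}$ given by twisting by a suitable line bundle (to absorb the $\gamma$ block). Concretely, because $f^\dagger\circ f=\Id$ forces a relation among $\alpha,\gamma,\delta$, the block $\gamma\in\Hom(A,\widehat B)$ composed with $\alpha^{-1}$ corresponds to a symmetric morphism $B\to\widehat B$, hence (over an algebraically closed field) to a line bundle $N$ on $B$ up to algebraic equivalence; the equivalence $(\cdbc{E}\mapsto \alpha_*\cdbc{E}\otimes N)$ should have $\gamma$-matrix equal to $f$. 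One then verifies $f_{\cplx{K}}=f$ by the explicit formula for $\gamma_{A,B}$ on kernels supported on graphs, which is a direct matrix computation.

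The main obstacle I anticipate is the bookkeeping in the second case: one must check that the symmetry/compatibility forced by $f\in\Sp$ really does make $\gamma\circ\alpha^{-1}$ (or $\widehat\alpha\circ\gamma$, depending on conventions) a symmetric homomorphism $B\to\widehat B$, so that it arises from a line bundle, and one must then pin down which line bundle and verify that $\gamma_{A,B}$ applied to the resulting kernel returns exactly the matrix $\begin{pmatrix}\alpha&0\\\gamma&\widehat\alpha^{-1}\end{pmatrix}$ and not some twist of it. This requires unwinding Orlov's definition of the correspondence $\gamma_{A,B}$ (how a kernel $\cplx{K}$ determines $f_{\cplx{K}}$ via the action on the cohomology of $A\times\widehat A$, or equivalently via $\Phi^{\cplx{K}}$ conjugating the Fourier--Mukai kernel of $A$ into that of $B$) and checking it agrees on these concentrated kernels. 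A secondary point to be careful about is that in the second case $\gamma$ need not be an isogeny — $N$ may be algebraically trivial or even $\gamma=0$, in which case the equivalence is simply $\alpha_*$ — so the argument must not implicitly assume $\gamma\neq 0$. Once the second case is dispatched, combining it with Orlov's first case and the reduction of the first paragraph completes the proof of surjectivity of $\gamma_{A,B}$.
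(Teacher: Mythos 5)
Your proof is correct, and in the crucial case $\beta=0$ it takes a genuinely different route from the paper's. Both arguments begin the same way: Proposition~\ref{prop:key-result} splits the problem into the case where $\beta$ is an isogeny (handled by Orlov's Construction 4.10) and the case $\beta=0$, where $\alpha$ is an isomorphism and $\delta=\widehat\alpha^{-1}$. For the latter case the paper further distinguishes $\gamma\neq 0$ from $\gamma=0$: when $\gamma\neq 0$ it is an isogeny by Lemma~\ref{lem:isogenous-divison-algebra}, so the paper applies Orlov's construction to $(f^\ddagger)^s\in\Sp(\widehat A\times A,\widehat B\times B)$ and conjugates back by the Poincar\'e equivalences $\Phi^{\mathcal P_A}_{A\to\widehat A}$, $\Phi^{\mathcal P_B}_{B\to\widehat B}$ using Lemma~\ref{lem:ddagger}; when $\gamma=0$ it uses the graph kernel. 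You instead treat both subcases at once with the twisted graph kernel $\cO_{\Gamma_\alpha}\otimes\pi_2^*N$, and this works: when $\beta=0$ the relation $f^\dagger\circ f=\Id$ gives $\widehat\gamma\circ\alpha=\widehat\alpha\circ\gamma$, hence $\widehat{(\gamma\circ\alpha^{-1})}=\widehat\alpha^{-1}\circ\widehat\gamma=\gamma\circ\alpha^{-1}$, so $\gamma\circ\alpha^{-1}$ is symmetric and, over an algebraically closed field, equals $\phi_N$ for some line bundle $N$ on $B$; combining the standard values $\gamma_{A,B}(\alpha_*)=\left(\begin{smallmatrix}\alpha&0\\0&\widehat\alpha^{-1}\end{smallmatrix}\right)$ and $\gamma_B(\otimes N)=\left(\begin{smallmatrix}1&0\\\phi_N&1\end{smallmatrix}\right)$ with the multiplicativity of $\gamma$ returns exactly $f$. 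Your route is more direct---it invokes Orlov's construction only once, avoids the $\ddagger$/$s$ bookkeeping and the Poincar\'e conjugation, and does not even use that a nonzero $\gamma$ is an isogeny---at the price of needing the surjectivity of $\NS(B)\to\Hom^{\mathrm{sym}}(B,\widehat B)$ and the value of $\gamma_B$ on twists by non-algebraically-trivial line bundles, neither of which the paper requires. Finally, your preliminary reduction via $f^{-1}=f^\dagger$ is valid but can be shortened: by Proposition~\ref{prop:symplectic-isomorphism-isogenous} and Lemma~\ref{lem:prop-isogenous-ab-var}(2), the mere existence of $f$ forces $\End^0(A)\simeq\End^0(B)$, so the two hypotheses are equivalent whenever the target of $\gamma_{A,B}$ is nonempty.
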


\begin{proof} Given $f=\begin{pmatrix} \alpha &\beta\\
 \gamma & \delta
\end{pmatrix}\in \Sp(A\times\widehat A,B\times\widehat B)$ by Proposition \ref{prop:key-result} either $\beta$ is an isogeny or $0$. In the first case we can apply Orlov's \cite[Construction 4.10]{Or02}. If $\beta=0$ and $\gamma\neq 0$  then $\gamma$ is an isogeny. By considering $f^\ddagger$ and  reversing the roles of $A$ and $\widehat A$ and of $B$ and $\widehat B$ we have $(f^\ddagger)^s\in \Sp(\widehat A\times A,\widehat B\times B)$. Now we can apply Orlov's construction to $(f^\ddagger)^s$ and conclude the existence of a kernel $\cplx{{\widehat K}}\in\cdbc{\widehat A\times\widehat B}$ inducing an equivalence $\Phi_{\widehat A\to \widehat B}^\cplx{{\widehat K}}\colon \cdbc{\widehat A}\to\cdbc{\widehat B}$ such that $f_\cplx{K}=(f^\ddagger)^s$. Now, we can use the Poincar\'e line bundles $\mathcal P_A$, and $\mathcal P_B$ as kernels to give equivalences $\Phi_{A\to \widehat A}^{\mathcal P_A}$ and  $\Phi_{B\to \widehat B}^{\mathcal P_B}$. Now we consider the equivalence $$\Phi^\cplx{K}:=(\Phi_{B\to \widehat B}^{\mathcal P_B})^{-1}\circ \Phi_{\widehat A\to \widehat B}^\cplx{{\widehat K}}\circ \Phi_{A\to \widehat A}^{\mathcal P_A}\colon \cdbc{A}\to\cdbc{B}$$ and use the well known fact $f_{\mathcal P_A}=\psi_{L_A}$, $f_{\mathcal P_B}=\psi_{L_B}$ to conclude that $$f_\cplx{K}=\psi_{L_B}^{-1}\circ (f^\ddagger)^s\circ\psi_{L_A}=(f^\ddagger)^\ddagger=f$$ where we have used Lemma \ref{lem:ddagger}.   Therefore we are left with the case $\beta=0$, $\gamma=0$. Now by Proposition \ref{prop:key-result}, $\alpha\colon A\xrightarrow{\sim} B$ is an isomorphism such that $\delta=\widehat\alpha^{-1}$, thus $f= \begin{pmatrix} \alpha &0\\
0 & \widehat\alpha^{-1}
\end{pmatrix}$ and this is well known to admit the kernel $\cplx{K}=(\alpha\times\Id_B)^*\mathcal O_{\Delta_B}$, where $\Delta_B$ is the diagonal $\Delta_B\hookrightarrow B\times B$. A straightforward computation shows that $f_\cplx{K}=f$ and so the proof is finished.
\end{proof}

We strongly believe that the key to prove the Claim for arbitrary Abelian varieties over an algebraically closed field of characteristic zero lies in pursuing further the symplectic approach.  We plan to address the general case in a future work.

\def\cprime{$'$}


\begin{thebibliography}{10}

  
\bibitem{Bre83}
{\sc L.~Breen}, {\em Fonctions th\^eta et th\'eor\`eme du cube},
Lecture Notes in Mathematics, 980. Springer-Verlag, Berlin, 1983.

\bibitem{Lifting} {\sc C.-L.~Chai,  B.~Conrad and F.~Oort},
Complex multiplication and lifting problems.
Mathematical Surveys and Monographs, 195. American Mathematical Society, Providence, RI, 2014.
  
\bibitem{Gro72}
{\sc A.~Grothendieck}, {\em Biextension de faisceaux de groupes}, in Groupes de
monodromie en g\'eom\'etrie alg\'ebrique. {I} \newblock S\'eminaire de G\'eom\'etrie Alg\'ebrique du Bois-Marie 1967Ð1969 (SGA 7 I). Dirig\'e par A. Grothendieck. Avec la collaboration de M. Raynaud et D. S. Rim. Lecture Notes in Mathematics, Vol. 288. Springer-Verlag, Berlin-New York, 1972, pp.~133--217.

\bibitem{LOZ96}
{\sc H.~W. Lenstra, Jr., F.~Oort, and Y.~G. Zarhin}, {\em Abelian
  subvarieties}, J. Algebra, 180 (1996), pp.~513--516.
  
\bibitem{LST13}
{\sc A.~C. L\'opez Mart\'{\i}n, D.~S\'anchez G\'omez, and C.~Tejero Prieto}, {\em Relative FourierÐMukai transforms for Weierstra\ss{} fibrations, Abelian schemes and Fano fibrations}, Math. Proc. Camb. Phil. Soc.  155 (2013), pp. 129--153.

\bibitem{Muk78}
{\sc S.~Mukai}, {\em Semi-homogeneous vector bundles on an {A}belian variety},
  J. Math. Kyoto Univ., 18 (1978), pp.~239--272.
  
\bibitem{Muk94}
{\sc S.~Mukai}, {\em Abelian variety and spin representation (in Japanese)}, Proceedings of the symposium ``Hodge theory and Algebraic Geometry'', Sapporo, (1994) pp.~110--135. English translation, University of Warwick, preprint 13, (1998).

\bibitem{Mum69}
{\sc D.~Mumford}, {\em Bi-extensions of formal groups}, in Algebraic Geometry (Internat. Colloq., Tata Inst. Fund. Res., Bombay, 1968), Oxford Univ. Press, London, 1969, pp. 307Ð322.

\bibitem{GIT}
{\sc D.~Mumford, J.~Fogarty, and F.~Kirwan}, {\em Geometric invariant theory},
  vol.~34 of Ergebnisse der Mathematik und ihrer Grenzgebiete (2), Springer-Verlag, Berlin, third~ed., 1994.

\bibitem{Or97}
{\sc D.~O. Orlov}, {\em Equivalences of derived categories and ${K}3$
  surfaces}, J. Math. Sci. (New York), 84 (1997), pp.~1361--1381.
\newblock Algebraic geometry, 7.

\bibitem{Or02}
\leavevmode\vrule height 2pt depth -1.6pt width 23pt, {\em Derived categories
  of coherent sheaves on Abelian varieties and equivalences between them}, Izv.
  Ross. Akad. Nauk Ser. Mat., 66 (2002), pp.~131--158.

\bibitem{Pol96}
{\sc A.~Polishchuk}, {\em Symplectic biextensions and a generalization of the Fourier-Mukai transform}, Math. Res. Lett. 3  no. 6, (1996), pp.~813--828. 

\bibitem{Pol03}
{\sc A.~Polishchuk}, {\em Abelian varieties, theta functions and the Fourier transform}, Cambridge University Press, Cambridge, 2003.

\end{thebibliography}
\end{document}